\newcommand{\comment}[1]{}
\newcommand{\bC}{{\mathbb C}}
\newcommand{\bN}{{\mathbb N}}
\newcommand{\R}{{\mathbb R}}
\def\Bergman{{\mathcal A_\alpha^p(\mathbb C_+)}}
\def\H{{\mathcal H}}
\def\Hardy{{\mathcal H^p_{|\cdot|^\alpha}(\mathbb C_+)}}
\def\Hau{{\mathscr H_\varphi}}
\def\gsim{\raisebox{-1ex}{$~\stackrel{\textstyle >}{\sim}~$}}
\newcounter{rea}
\newcounter{rek}
\newcounter{res}
\newcommand{\ackname}{Acknowledgements}
\newenvironment{acknowledgement}{%
	\titlepage
	\null\vfil
	\@beginparpenalty\@lowpenalty
	\begin{center}%
		\bfseries \ackname
		\@endparpenalty\@M
\end{center}}%
{\par\vfil\null\endtitlepage}
\newenvironment{acknowledgement}{%
	\if@twocolumn
	\section*{\ackname}%
	\else
	\begin{center}%
		{\bfseries \ackname\vspace{0em}\vspace{\z@}}%
	\end{center}%
	\quotation
	\fi}
\begin{document}
\title[]{Hausdorff operators on weighted Bergman and Hardy spaces}         
\author{Ha Duy Hung}    
\address{High School for Gifted Students,
	Hanoi National University of Education, 136 Xuan Thuy, Hanoi, Vietnam} 
\email{{\tt hunghd@hnue.edu.vn}}
\author{Luong Dang Ky$^*$}
\address{Department of Education, Quy Nhon University, 
170 An Duong Vuong, Quy Nhon, Binh Dinh, Vietnam} 
\email{{\tt luongdangky@qnu.edu.vn}}
\keywords{Hausdorff operator, weighted Hardy space, weighted Bergman space, holomorphic function}
\subjclass[2010]{47B38, 42B30, 46E15}
\thanks{$^*$Corresponding author}

\begin{abstract} 
	Let $1\leq p<\infty$, $\alpha>-1$, and let $\varphi$ be a measurable function on $(0,\infty)$. The main purpose of this paper is to study the Hausdorff operator 
	\[ \mathscr H_\varphi f(z)=\int_0^\infty f\left(\frac{z}{t}\right) \frac{\varphi(t)}{t} dt, \quad z\in \mathbb C^+, \] 
	on the weighted Bergman space $\mathcal A^p_\alpha(\mathbb C_+)$ and on the power weighted Hardy space $\mathcal H^p_{|\cdot|^\alpha}(\mathbb{C_+})$ of the upper half-plane. Some applications to the real version of $\mathscr H_\varphi$ are also given.  
\end{abstract}

\maketitle
\newtheorem{theorem}{Theorem}[section]
\newtheorem{lemma}{Lemma}[section]
\newtheorem{proposition}{Proposition}[section]
\newtheorem{remark}{Remark}[section]
\newtheorem{corollary}{Corollary}[section]
\newtheorem{definition}{Definition}[section]
\newtheorem{example}{Example}[section]
\numberwithin{equation}{section}
\newtheorem{Theorem}{Theorem}[section]
\newtheorem{Lemma}{Lemma}[section]
\newtheorem{Proposition}{Proposition}[section]
\newtheorem{Remark}{Remark}[section]
\newtheorem{Corollary}{Corollary}[section]
\newtheorem{Definition}{Definition}[section]
\newtheorem{Example}{Example}[section]
\newtheorem*{theorema}{Theorem A}

\section{Introduction and main results} 

Let $\varphi$ be a measurable function on $(0,\infty)$ and $\bC_+:=\{x+iy\in\bC: y>0\}$ be the upper half-plane in the complex plane. Following \cite{HKQ20}, the {\it Hausdorff operator} $\mathscr H_\varphi$ associated with the kernel $\varphi$ is defined for suitable holomorphic functions $f$ on $\bC_+$ by
$$\mathscr H_\varphi f(z)=\int_{0}^{\infty} f\left(\frac{z}{t}\right) \frac{\varphi(t)}{t} dt,\quad z\in\bC_+.$$
The real version of $\mathscr H_\varphi$ is defined for suitable measurable functions $f^*$ on $\R$ by
$$\H_\varphi (f^*)(x)=\int_{0}^{\infty} f^*\left(\frac{x}{t}\right) \frac{\varphi(t)}{t} dt,\quad x\in \R.$$

Hausdorff operators (Hausdorff summability methods) appeared long ago
aiming to solve certain classical problems in analysis. The theory of Hausdorff summability started with the paper of Hausdorff \cite{Ha} in 1921. The modern theory of Hausdorff operators began with the work of Siskakis and Galanopoulos \cite{GSi, Si87, Si90, Si96} in the complex analysis setting and with the work of Georgakis \cite{Ge} and Liflyand-M\'oricz \cite{LM00} in the Fourier transform setting on the real line. The Hausdorff operator is an interesting operator in harmonic analysis. There are many classical operators in analysis which are special cases of the Hausdorff operator if one chooses suitable kernel functions $\varphi$, such as the classical Hardy operator, its adjoint operator, the Ces\`aro type operators, the Erd\'elyi-Kober fractional integral operators, the de La Vall\'ee-Poussin type operators, the Picar and Bessel operators, the Stieltjes type operators, etc.  Also, the Riemann–Liouville fractional integral operators and the Weyl fractional integral operators can be derived from the Hausdorff operators. See the survey article \cite{Li13} and the references therein. In recent years, there is an increasing interest in the study of boundedness of the Hausdorff operators on the Lebesgue and real Hardy spaces (see, for example, \cite{HKQ18, LM19, LM00, LMPS, MO}), on the power weighted Hardy spaces (see, for example, \cite{Ky, RF16}), and on some spaces of holomorphic functions (see, for example, \cite{AS, BBMM, BG, Bo, GP, GSt, HKQ20, Mi21, St}).

 Let $0<p<\infty$ and $\alpha>0$, the {\it weighted Bergman space} $\Bergman$ is defined as the set of all holomorphic functions $f$ on $\mathbb C_+$ such that
$$\|f\|_{\Bergman}:=\left(\int_{\bC_+} |f(z)|^p dA_\alpha(z)\right)^{1/p}<\infty,$$
where $dA_\alpha(z)= y^{\alpha-1} dx dy$ for all $z=x+iy\in\bC_+$. A holomorphic function $f$ on $\mathbb C_+$ is said to belong to the {\it Dirichlet space} $\mathcal D(\bC_+)$ if its derivative $f'$ is in the Bergman space $\mathcal A^2_1(\bC_+)$. Upon identifying functions
that differ by a constant, the space $\mathcal D(\bC_+)$ becomes a Hilbert space with norm
$$\|f\|_{\mathcal D(\bC_+)}:= \|f'\|_{\mathcal A^2_1(\bC_+)}=\left( \int_{\bC_+} |f'(z)|^2 dA_1(z)\right)^{1/2}<\infty.$$ 

Let $0< p\leq \infty$, $\alpha>-1$, and let $\varphi$ be a measurable function on $(0,\infty)$ which may or may not satisfy
\begin{equation}\label{main inequality}
	\int_0^\infty t^{\frac{1+\alpha}{p}-1} |\varphi(t)| dt<\infty.
\end{equation}

{\it Our first main result} reads as follows.

\begin{theorem}\label{the main theorem 1} 
	Let $1\leq p<\infty$, $\alpha>0$, and let $\varphi$ be a measurable function on $(0,\infty)$. Then:
	\begin{enumerate}[\rm (i)]
		\item If (\ref{main inequality}) holds, then $\Hau$ is bounded on $\Bergman$. Moreover,
		$$\left|\int_0^\infty t^{\frac{1+\alpha}{p}-1} \varphi(t) dt\right|\leq \|\Hau\|_{\Bergman \to \Bergman}\leq \int_0^\infty t^{\frac{1+\alpha}{p}-1} |\varphi(t)| dt.$$
		
		\item Conversely, if $\varphi\geq 0$ and $\Hau$ is bounded on $\Bergman$, then (\ref{main inequality}) holds.
	\end{enumerate}
\end{theorem}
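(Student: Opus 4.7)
The plan is to address the three assertions of Theorem~\ref{the main theorem 1} separately: the upper bound of (i) via Minkowski's integral inequality, the lower bound of (i) via a family of approximate eigenfunctions, and the converse (ii) as a consequence of the lower bound.

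For the upper bound in (i), I apply Minkowski's inequality for integrals to write
\[
\|\Hau f\|_{\Bergman}\le \int_0^\infty \|f(\cdot/t)\|_{\Bergman}\,\frac{|\varphi(t)|}{t}\,dt.
\]
The change of variables $z=tw$ in the defining integral of the weighted Bergman norm yields $dA_\alpha(tw)=t^{1+\alpha}\,dA_\alpha(w)$, from which the scaling identity $\|f(\cdot/t)\|_{\Bergman}=t^{(1+\alpha)/p}\|f\|_{\Bergman}$ follows. Substituting this into the Minkowski bound gives the stated upper bound on $\|\Hau\|_{\Bergman\to\Bergman}$.

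For the lower bound in (i), I exploit the quasi-eigenfunction structure of $\Hau$. At least formally, the scale-invariant function $f_0(z)=(-iz)^{-(1+\alpha)/p}$ (principal branch) satisfies $f_0(z/t)=t^{(1+\alpha)/p}f_0(z)$, hence $\Hau f_0 = c_\varphi\,f_0$ with $c_\varphi:=\int_0^\infty t^{(1+\alpha)/p-1}\varphi(t)\,dt$. Although $f_0\notin\Bergman$, I can approximate it by a family of holomorphic test functions such as
\[
f_\delta(z)=\frac{(-iz)^{-(1+\alpha)/p}}{(-iz)^\delta+(-iz)^{-\delta}},\qquad\delta>0,
\]
whose Mellin profile concentrates at exponent $(1+\alpha)/p$ as $\delta\to 0^+$. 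A calculation in logarithmic polar coordinates $u=\log|z|$, $\theta=\arg z$ shows $f_\delta\in\Bergman$ with $\|f_\delta\|_{\Bergman}^p\asymp \delta^{-1}$, and that $\Hau f_\delta$ factors as $f_\delta\cdot Q_\delta$ for a multiplier $Q_\delta$ which, written as a $\cosh$-convolution against $\Psi(u'):=e^{(1+\alpha)u'/p}\varphi(e^{u'})$, converges pointwise (in $z$) to $c_\varphi$ as $\delta\to 0^+$. Passing to the scaled variable $v=\delta u$ and invoking dominated convergence yields $\|\Hau f_\delta\|_{\Bergman}/\|f_\delta\|_{\Bergman}\to |c_\varphi|$, which gives the lower bound. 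The main obstacle is controlling this limit: pointwise convergence of $Q_\delta$ is immediate, but producing a uniform integrable dominant in the scaled variable requires a careful exploitation of the $\cosh$-structure in both $f_\delta$ and $Q_\delta$, together with the hypothesis (\ref{main inequality}).

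Finally, the converse (ii) follows from the lower bound. When $\varphi\ge 0$, the quantity $\int_0^\infty t^{(1+\alpha)/p-1}\varphi(t)\,dt$ equals $|c_\varphi|$ (possibly infinite a priori). Running the lower-bound argument first for the truncations $\varphi_R:=\varphi\,\mathbf{1}_{\{1/R\le t\le R\}}$, which automatically satisfy (\ref{main inequality}), yields $\int_{1/R}^{R} t^{(1+\alpha)/p-1}\varphi(t)\,dt\le\|\Hau\|_{\Bergman\to\Bergman}$ for each $R>0$; letting $R\to\infty$ by monotone convergence gives (\ref{main inequality}).
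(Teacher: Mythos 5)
Your upper bound in (i) is exactly the paper's argument (Minkowski plus the scaling identity $dA_\alpha(tw)=t^{1+\alpha}dA_\alpha(w)$), and your lower bound in (i) is the same idea as the paper's, executed with a different concentrating family: the paper uses $f_\varepsilon(z)=(z+i)^{-(1+\alpha)/p-\varepsilon}$ together with a truncation $\varphi_\delta=\varphi\chi_{(\delta,1/\delta)}$ and a Lagrange mean value estimate, while you use a Mellin-concentrated $\cosh$-profile. Be aware, though, that the domination step you flag as ``the main obstacle'' genuinely fails without truncation: the natural uniform bound one extracts from the $\cosh$-structure is
$|Q_\delta(z)-c_\varphi|\lesssim\int_0^\infty t^{\frac{1+\alpha}{p}-1}|\varphi(t)|\bigl(\max(t,1/t)^\delta-1\bigr)\,dt$,
and (\ref{main inequality}) does not guarantee this tends to $0$ (take $\varphi(t)=t^{-(1+\alpha)/p}(1+|\log t|)^{-2}$). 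You must first restrict to $\varphi\chi_{(1/R,R)}$, prove the limit there, and then remove the truncation using the already-proved upper bound for $\|\mathscr H_{\varphi-\varphi\chi_{(1/R,R)}}\|$ --- which is precisely the paper's route.

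The genuine gap is in (ii). From the lower-bound argument applied to $\varphi_R=\varphi\chi_{(1/R,R)}$ you obtain
$\int_{1/R}^R t^{\frac{1+\alpha}{p}-1}\varphi(t)\,dt\le\|\mathscr H_{\varphi_R}\|_{\Bergman\to\Bergman}$,
but you then silently replace $\|\mathscr H_{\varphi_R}\|$ by $\|\Hau\|$. There is no monotonicity of operator norms under truncation of the kernel here: even though $\varphi\ge 0$, the functions in $\Bergman$ are complex-valued and oscillate, so for a general $f$ the truncated integral $\int_{1/R}^{R}f(z/t)\frac{\varphi(t)}{t}\,dt$ can be much larger than $\int_{0}^{\infty}f(z/t)\frac{\varphi(t)}{t}\,dt$ because the tails may cancel the middle. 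Defeating this cancellation is the actual content of part (ii), and it is exactly what the paper's Lemma \ref{14:44, 08/7/2021} supplies: for the test functions $\Phi_\varepsilon$, either $\Im(\Phi_\varepsilon)$ or $\Re(\Phi_\varepsilon)$ has constant sign and is comparable to $|\Phi_\varepsilon|$ on a sector $A_{(\frac{\pi}{2}-\varepsilon(p,\alpha),\frac{\pi}{2})}$, and since that sector is dilation-invariant the integrand $\Im[\Phi_\varepsilon(z/t)]\frac{\varphi(t)}{t}$ has a fixed sign for all $t>0$ when $z$ lies in the sector. This converts $|\Hau(\Phi_\varepsilon)(z)|$ into a genuine lower bound by $\int_0^\infty|\Phi_\varepsilon(z/t)|\frac{\varphi(t)}{t}\,dt$, after which integrating over the truncated sector yields $\int_0^{1/\varepsilon}t^{\frac{1+\alpha}{p}-1+\varepsilon}\varphi(t)\,dt\lesssim\varepsilon^{-\varepsilon}\|\Hau\|$ and the conclusion follows by letting $\varepsilon\to0$ (monotone convergence is legitimate only at this last stage, where all quantities are nonnegative). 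Your proposal contains no substitute for this no-cancellation device, so as written (ii) does not follow.
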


\begin{corollary}\label{07:43, 19/11/2021}
	Let $1\leq p<\infty$, $\alpha>0$, and let $\varphi$ be a nonnegative  measurable function on $(0,\infty)$. Then, $\Hau$ is bounded on $\Bergman$ if and only if (\ref{main inequality}) holds, moreover,
	$$\|\Hau\|_{\Bergman \to \Bergman}= \int_0^\infty t^{\frac{1+\alpha}{p}-1} \varphi(t) dt.$$
\end{corollary}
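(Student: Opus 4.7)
The plan is to derive this corollary as an immediate consequence of Theorem \ref{the main theorem 1}, exploiting the fact that the nonnegativity hypothesis $\varphi\geq 0$ causes the two-sided bound in part (i) of that theorem to collapse into a single value.

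First, I would handle the ``only if'' direction: if $\Hau$ is bounded on $\Bergman$, then because $\varphi\geq 0$ by hypothesis, part (ii) of Theorem \ref{the main theorem 1} applies verbatim and yields (\ref{main inequality}).

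For the ``if'' direction, assuming (\ref{main inequality}), part (i) of Theorem \ref{the main theorem 1} immediately gives boundedness of $\Hau$ on $\Bergman$ together with the estimate
\[
\left|\int_0^\infty t^{\frac{1+\alpha}{p}-1}\varphi(t)\,dt\right| \leq \|\Hau\|_{\Bergman\to\Bergman} \leq \int_0^\infty t^{\frac{1+\alpha}{p}-1}|\varphi(t)|\,dt.
\]
Since $\varphi\geq 0$, I have $|\varphi(t)|=\varphi(t)$ pointwise, so the right-hand side equals $\int_0^\infty t^{(1+\alpha)/p-1}\varphi(t)\,dt$. Moreover, the integral $\int_0^\infty t^{(1+\alpha)/p-1}\varphi(t)\,dt$ is itself nonnegative, so the absolute value on the left-hand side may be removed. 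Thus both bounds in the sandwich coincide with $\int_0^\infty t^{(1+\alpha)/p-1}\varphi(t)\,dt$, forcing the equality in the statement.

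There is no real obstacle here; the statement is essentially a repackaging of Theorem \ref{the main theorem 1} in the nonnegative case, and the only observation needed is that the absolute value signs on both sides of the inequality become redundant when $\varphi\geq 0$. All substantive work is contained in the proof of Theorem \ref{the main theorem 1} itself.
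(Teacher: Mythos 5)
Your proposal is correct and is exactly the intended derivation: the paper states this corollary without a separate proof precisely because it follows immediately from combining parts (i) and (ii) of Theorem \ref{the main theorem 1}, with the observation that for $\varphi\geq 0$ the absolute values are redundant and the two sides of the sandwich coincide. Nothing is missing.
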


\begin{remark}
	Our first main result generalizes some recent results in \cite{BBMM, St}.
\end{remark}

As a consequence of Theorem \ref{the main theorem 1}, we obtain the following result.

\begin{theorem}\label{16:22, 26/4/2025}
	Let  $\varphi$ be a locally integrable function  on $(0,\infty)$ such that $\int_0^\infty \frac{|\log t|}{t} |\varphi(t)| dt<\infty$. Then, $\Hau$ is bounded on $\mathcal D(\bC_+)$, moreover,
	$$\|\Hau\|_{\mathcal D(\bC_+) \to \mathcal D(\bC_+)}\leq \int_0^\infty \frac{|\varphi(t)|}{t} dt.$$
\end{theorem}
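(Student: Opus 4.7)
The idea is that differentiating a Hausdorff operator produces another Hausdorff operator, which lets one reduce a norm bound on the Dirichlet space $\mathcal D(\bC_+)$ to a norm bound on the Bergman space $\mathcal A^2_1(\bC_+)$, to which Theorem \ref{the main theorem 1} already applies.

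Formally, differentiation under the integral sign yields
$$(\mathscr H_\varphi f)'(z) = \int_0^\infty f'(z/t)\cdot\frac{1}{t}\cdot\frac{\varphi(t)}{t}\,dt = \mathscr H_\psi(f')(z),\qquad \psi(t):=\varphi(t)/t.$$
To apply Theorem \ref{the main theorem 1}(i) with $p=2$, $\alpha=1$ and kernel $\psi$, one needs
$$\int_0^\infty t^{\frac{1+1}{2}-1}|\psi(t)|\,dt \;=\; \int_0^\infty \frac{|\varphi(t)|}{t}\,dt \;<\;\infty.$$
This follows from the two standing hypotheses: on $(0,1/2]\cup[2,\infty)$ the integrand is dominated by $(\log 2)^{-1}\cdot\frac{|\log t|}{t}|\varphi(t)|$, integrable by assumption; on $[1/2,2]$ it is dominated by $2|\varphi(t)|$, integrable by local integrability of $\varphi$. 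Theorem \ref{the main theorem 1}(i) then gives, since $f'\in\mathcal A^2_1(\bC_+)$ by definition of $\mathcal D(\bC_+)$,
$$\|\mathscr H_\varphi f\|_{\mathcal D(\bC_+)} = \|\mathscr H_\psi(f')\|_{\mathcal A^2_1(\bC_+)} \le \int_0^\infty\frac{|\varphi(t)|}{t}\,dt\cdot\|f'\|_{\mathcal A^2_1(\bC_+)} = \int_0^\infty\frac{|\varphi(t)|}{t}\,dt\cdot\|f\|_{\mathcal D(\bC_+)}.$$

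The nontrivial point is justifying these formal steps, namely that $\mathscr H_\varphi f$ is a well-defined holomorphic function on $\bC_+$ (modulo constants) and that $\partial_z$ may be interchanged with the integral. For $f\in\mathcal D(\bC_+)$ the standard logarithmic growth estimate $|f(z)-f(i)|\lesssim \|f\|_{\mathcal D(\bC_+)}\sqrt{1+|\log \operatorname{Im} z|}$ on $\bC_+$ shows that $|f(z/t)|$ grows at most like $\sqrt{|\log t|}$ as $t\to 0^+$ or $t\to\infty$. Cauchy--Schwarz with respect to the measure $\frac{|\varphi(t)|}{t}\,dt$ together with the $|\log t|$-weighted hypothesis then gives absolute convergence of $\int_0^\infty f(z/t)\frac{\varphi(t)}{t}\,dt$ at every $z\in\bC_+$. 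Local uniform bounds on $f'$ on compact subsets of $\bC_+$ (via Cauchy's integral formula applied to the above Dirichlet growth estimate) legitimize passing $\partial_z$ under the integral, which completes the proof. This regularity step is the main technical obstacle; the algebraic identity $(\mathscr H_\varphi f)' = \mathscr H_\psi(f')$ combined with Theorem \ref{the main theorem 1} then does the rest.
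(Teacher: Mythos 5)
Your argument is essentially the paper's own proof: the identity $(\Hau f)'=\mathscr H_{\widetilde{\varphi}}(f')$ with $\widetilde{\varphi}(t)=\varphi(t)/t$, the observation that the two hypotheses force $\int_0^\infty\frac{|\varphi(t)|}{t}\,dt<\infty$, and an application of Theorem \ref{the main theorem 1}(i) with $p=2$, $\alpha=1$ to the kernel $\widetilde{\varphi}$. The only flaw is the growth estimate you invoke for the well-definedness step: $|f(z)-f(i)|\lesssim\|f\|_{\mathcal D(\bC_+)}\sqrt{1+|\log \Im(z)|}$ is false as a global statement on $\bC_+$ (a Dirichlet function can be unbounded along the line $\Im(z)=1$; the correct bound involves $\log\frac{|z+i|^2}{\Im(z)}$), although the consequence you actually use --- that $|f(z/t)|=O_z(\sqrt{|\log t|})$ for fixed $z$ as $t\to 0^+$ or $t\to\infty$ --- is true. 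The paper reaches the same point more simply: the pointwise bound $|f'(z)|\lesssim y^{-1}\|f\|_{\mathcal D(\bC_+)}$ together with the mean value theorem along the ray from $z$ to $z/t$ gives $|f(z/t)|\lesssim |f(z)|+\frac{|z|}{y}\,|\log t|\,\|f\|_{\mathcal D(\bC_+)}$, which pairs directly with the $|\log t|$-weighted hypothesis and needs no Cauchy--Schwarz.
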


Given $1\leq q<\infty$, a nonnegative locally integrable function $w:\R\to [0,\infty)$ belongs to the {\it Muckenhoupt class} $A_q(\R)$, say $w\in A_q(\R)$, if there exists a constant $C>0$ so that
$$\frac{1}{|I|}\int_I w(x)dx \left(\frac{1}{|I|} \int_I [w(x)]^{-\frac{1}{q-1}} dx\right)^{q-1}\leq C,\quad \mbox{if $1<q<\infty$},$$
and
$$\frac{1}{|I|}\int_I w(x)dx\leq C \mathop{\mbox{ess-inf}}\limits_{x\in I} w(x),\quad \mbox{if $q=1$},$$
for all intervals $I\subset \R$. We say that $w\in A_\infty(\R)$ if $w\in A_q(\R)$ for some $1\leq q<\infty$.

\begin{remark}\label{10:11, 18/11/2021}
	Let $1<p<\infty$ and $-1<\alpha< p-1$. Then, it is well-known that the power weight function $|x|^\alpha$ belongs to the  Muckenhoupt class $A_p(\R)$, and thus the Hilbert transform $H$, which is defined by
	\begin{equation}
		H f(x)= \frac{1}{\pi}\; {\rm p.v.}\int_{-\infty}^{\infty}\frac{f(x-y)}{y}dy, \quad \mbox{a.e. $x\in\R$},
	\end{equation}
	is bounded on $L^p_{|\cdot|^\alpha}(\R)$. See the book of Garc\'ia-Cuerva and Rubio de Francia \cite[CHAPTER IV]{GR}. 
\end{remark}

Let  $0<p\leq \infty$ and $\alpha>-1$. Following Garc\'ia-Cuerva \cite{Ga}, we define the {\it power weighted Hardy space} $\H_{|\cdot|^\alpha}^p(\bC_+)$ as the set of all holomorphic functions $f$ on $\mathbb C_+$ such that
$$\|f\|_{\H_{|\cdot|^\alpha}^p(\mathbb C_+)}:= \sup_{y>0} \left(\int_{-\infty}^{\infty} |f(x+iy)|^p |x|^\alpha dx\right)^{1/p}<\infty$$
if $0<p<\infty$, and if $p=\infty$, then
$$\|f\|_{\H_{|\cdot|^\alpha}^\infty(\mathbb C_+)}:= \sup_{z\in\mathbb C_+} |f(z)|<\infty.$$

{\it Our second main result} reads as follows.

\begin{theorem}\label{the main theorem 2} 	
	Let $1\leq p\leq\infty$, $\alpha>-1$, and let $\varphi$ be a measurable function on $(0,\infty)$. Then:
	\begin{enumerate}[\rm (i)]
		\item If (\ref{main inequality}) holds, then $\Hau$ is bounded on $\Hardy$. Moreover,
		$$\left|\int_0^\infty t^{\frac{1+\alpha}{p}-1} \varphi(t) dt\right|\leq \|\Hau\|_{\Hardy \to \Hardy}\leq \int_0^\infty t^{\frac{1+\alpha}{p}-1} |\varphi(t)| dt.$$
		
		\item Conversely, if $\varphi\geq 0$ and $\Hau$ is bounded on $\Hardy$, then (\ref{main inequality}) holds.
	\end{enumerate}
\end{theorem}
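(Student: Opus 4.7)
The proof parallels that of Theorem~\ref{the main theorem 1}, with Minkowski's inequality handling the upper bound in Part~(i) and a formal-eigenfunction calculation handling both the lower bound and the converse. For the upper bound, write $\Hau f(x+iy) = \int_0^\infty f((x+iy)/t)\,\varphi(t)/t\,dt$ and apply Minkowski's integral inequality in the weighted $L^p(|x|^\alpha dx)$ norm:
\[
\bigl\|\Hau f(\cdot + iy)\bigr\|_{L^p(|x|^\alpha dx)} \leq \int_0^\infty \bigl\|f(\cdot/t + iy/t)\bigr\|_{L^p(|x|^\alpha dx)}\,\frac{|\varphi(t)|}{t}\,dt.
\]
The change of variable $x = tu$ produces the scaling identity $\|f(\cdot/t + i\eta)\|_{L^p(|x|^\alpha dx)} = t^{(1+\alpha)/p}\|f(\cdot + i\eta)\|_{L^p(|x|^\alpha dx)} \leq t^{(1+\alpha)/p}\|f\|_{\Hardy}$ for every $\eta > 0$. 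Taking the supremum over $y>0$ gives the stated upper bound. For $p=\infty$, reading $(1+\alpha)/p$ as $0$ and replacing $L^p(|x|^\alpha dx)$ by $L^\infty$, the same argument applies directly.

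The lower bound rests on the formal eigenfunction $f_*(z) := (-iz)^{-(1+\alpha)/p}$, taken on the principal branch (holomorphic on $\bC_+$ since $\Re(-iz) > 0$ there). Its homogeneity $f_*(z/t) = t^{(1+\alpha)/p} f_*(z)$ yields the exact identity
\[
\Hau f_*(z) = \left(\int_0^\infty t^{(1+\alpha)/p - 1}\varphi(t)\,dt\right) f_*(z),
\]
exhibiting the desired quantity as the eigenvalue. Since $|f_*(x+iy)|^p|x|^\alpha = (x^2+y^2)^{-(1+\alpha)/2}|x|^\alpha$ decays only like $|x|^{-1}$ at infinity, the formal eigenfunction $f_*$ just fails to belong to $\Hardy$ when $1 \leq p < \infty$. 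I would therefore regularize by the family $f_\delta(z) := (1 - iz)^{-(1+\alpha)/p - \delta}$ for $\delta > 0$, whose norm is computed explicitly by the substitution $x = (1+y)u$ (the supremum over $y$ being attained as $y \to 0^+$). A direct computation gives
\[
\Hau f_\delta(z) = \int_0^\infty \frac{t^{(1+\alpha)/p + \delta - 1}\,\varphi(t)}{(t - iz)^{(1+\alpha)/p + \delta}}\,dt,
\]
which is estimated from below at judiciously chosen test points so that, after letting $\delta \to 0^+$ and renormalizing, the ratio $\|\Hau f_\delta\|_{\Hardy}/\|f_\delta\|_{\Hardy}$ tends to $\bigl|\int_0^\infty t^{(1+\alpha)/p - 1}\varphi(t)\,dt\bigr|$.

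Part~(ii) then follows from the positivity hypothesis: under $\varphi \geq 0$, Fatou's lemma applied along the regularizing family $(f_\delta)$ converts boundedness of $\Hau$ into finiteness of $\int_0^\infty t^{(1+\alpha)/p - 1}\varphi(t)\,dt$. For $p=\infty$ the converse simplifies dramatically: test with $f \equiv 1 \in \mathcal H^\infty_{|\cdot|^\alpha}(\bC_+)$; then $\Hau f \equiv \int_0^\infty \varphi(t)/t\,dt$ is constant on $\bC_+$, so boundedness forces the integral to be finite directly. I expect the main obstacle to be the $\delta \to 0^+$ limit in the second paragraph: the test functions $f_\delta$ sit on the boundary of the space with $\|f_\delta\|_{\Hardy}$ blowing up like $\delta^{-1/p}$, so matching this blow-up against the behavior of $\Hau f_\delta$ on some chosen slice $\{y = \text{const}\}$ --- and extracting precisely the integral $\int_0^\infty t^{(1+\alpha)/p - 1}\varphi(t)\,dt$ in the limit, rather than some weighted variant of it --- will require careful book-keeping and a dominated-convergence argument.
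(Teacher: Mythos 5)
Your upper bound via Minkowski's inequality and the scaling identity is exactly the paper's argument, and your treatment of the case $p=\infty$ in part (ii) (testing with $f\equiv 1$) also matches. Your test family $f_\delta(z)=(1-iz)^{-(1+\alpha)/p-\delta}$ is, up to a unimodular constant, the paper's $f_\varepsilon(z)=(z+i)^{-(1+\alpha)/p-\varepsilon}$, so the lower bound in part (i) is the right strategy; but the "careful book-keeping" you defer is nontrivial and is resolved in the paper by a specific device you omit: one first truncates the kernel to $\varphi_\delta=\varphi\chi_{(\delta,1/\delta)}$, proves $\|\mathscr H_{\varphi_\delta}(f_\varepsilon)-f_\varepsilon\int_0^\infty t^{\frac{1+\alpha}{p}-1}\varphi_\delta(t)\,dt\|/\|f_\varepsilon\|\to 0$ via a Lagrange mean value estimate that is only valid on the compact parameter range $t\in(\delta,1/\delta)$, and then removes the truncation using $\|\mathscr H_\varphi-\mathscr H_{\varphi_\delta}\|\le\int_0^\delta+\int_{1/\delta}^\infty t^{\frac{1+\alpha}{p}-1}|\varphi(t)|\,dt\to 0$. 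Without truncation the mean value bound fails for $t$ near $0$ and $\infty$, so this step is a genuine (though repairable) gap.

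The serious gap is part (ii) for $1\le p<\infty$. You cannot get finiteness of $\int_0^\infty t^{\frac{1+\alpha}{p}-1}\varphi(t)\,dt$ by "Fatou's lemma along the regularizing family," because $\mathscr H_\varphi f_\delta(z)=\int_0^\infty f_\delta(z/t)\varphi(t)t^{-1}\,dt$ is a complex oscillatory integral: even with $\varphi\ge 0$, the phases of $f_\delta(z/t)$ vary with $t$, so $|\mathscr H_\varphi f_\delta(z)|$ is not bounded below by $\int_0^\infty|f_\delta(z/t)|\varphi(t)t^{-1}\,dt$, and Fatou gives you nothing about the latter. This cancellation problem is the entire content of the paper's Section 2: one works with $\Phi_\varepsilon(z)=(z+\varepsilon i)^{-\frac{1+\alpha}{p}-\varepsilon}$ on a thin sector $A_{(\frac{\pi}{2}-\varepsilon_0,\frac{\pi}{2})}$ near the imaginary axis (which is invariant under $z\mapsto z/t$), and Lemma \ref{14:44, 08/7/2021} shows that either $\Im(\Phi_\varepsilon)$ or $\Re(\Phi_\varepsilon)$ --- according to whether $\frac{1+\alpha}{p}$ is or is not an even integer --- keeps a constant sign there and satisfies $|\Im[\Phi_\varepsilon(z)]|\ge C(p,\alpha)|\Phi_\varepsilon(z)|$ (resp.\ for $\Re$). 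Only then does positivity of $\varphi$ let one pull the absolute value inside the integral, and a polar-coordinate computation over the truncated sector yields $\int_0^{1/\varepsilon}t^{\frac{1+\alpha}{p}-1+\varepsilon}\varphi(t)\,dt\lesssim\varepsilon^{-\varepsilon}\|\mathscr H_\varphi\|$, after which $\varepsilon\to 0$ and monotone convergence finish the proof. Your proposal contains no substitute for this sign/comparability analysis, so part (ii) as written does not go through.
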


\begin{corollary}	
	Let $1\leq p\leq\infty$, $\alpha>-1$, and let $\varphi$ be a nonnegative  measurable function on $(0,\infty)$. Then, $\Hau$ is bounded on $\Hardy$ if and only if (\ref{main inequality}) holds, moreover,
	$$\|\Hau\|_{\Hardy \to \Hardy}= \int_0^\infty t^{\frac{1+\alpha}{p}-1} \varphi(t) dt.$$
\end{corollary}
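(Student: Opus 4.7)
The plan is to mirror the proof of Theorem \ref{the main theorem 1} in the Hardy setting: Minkowski's integral inequality for the upper bound in (i), an approximate eigenfunction for the lower bound in (i), and Fatou's lemma for (ii).

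For the upper bound in (i), the key input is the scaling identity
\[\|f(\cdot/t)\|_{\Hardy} = t^{(1+\alpha)/p}\|f\|_{\Hardy},\qquad t > 0,\]
which follows from the substitution $x \mapsto tx$ inside the horizontal slice $\int_{-\infty}^\infty|f(x/t + iy/t)|^p|x|^\alpha\,dx$ combined with the invariance of $\sup_{y>0}$ under $y \mapsto y/t$. (For $p = \infty$ the sup-norm is scale-invariant and $\alpha$ drops out.) Applying Minkowski's integral inequality slice-by-slice to $\Hau f(z) = \int_0^\infty f(z/t)\varphi(t)/t\,dt$ then delivers the claimed upper bound.

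For the lower bound in (i), the formal dilation eigenfunction $(-iz)^{-(1+\alpha)/p}$ lies only marginally outside $\Hardy$, forcing a two-parameter regularization. I would test on $f_{a,\eps}(z) = (a - iz)^{-(1+\alpha)/p - \eps}$ (principal branch), with $a > 0$ and $\eps > 0$ small. The substitution $u = x/(y+a)$ yields $\|f_{a,\eps}\|^p_{\Hardy} = a^{-\eps p}\,C(\eps,\alpha,p)$ with the sup attained at $y = 0$, while the homogeneity $f_{a,\eps}(z/t) = t^{(1+\alpha)/p + \eps}(at - iz)^{-(1+\alpha)/p - \eps}$ gives
\[\Hau f_{a,\eps}(z) = \int_0^\infty t^{(1+\alpha)/p + \eps - 1}(at - iz)^{-(1+\alpha)/p - \eps}\varphi(t)\,dt.\]
As $a \to 0^+$, $(at - iz)^{-(1+\alpha)/p - \eps} \to (-iz)^{-(1+\alpha)/p - \eps}$ pointwise; inspecting the dominant slice $y \asymp a$ on which $\|f_{a,\eps}\|_{\Hardy}$ concentrates promotes this to the asymptotic $\|\Hau f_{a,\eps}\|_{\Hardy}/\|f_{a,\eps}\|_{\Hardy} \to |\int_0^\infty t^{(1+\alpha)/p + \eps - 1}\varphi(t)\,dt|$, so this absolute value is bounded above by $\|\Hau\|$. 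Letting $\eps \to 0^+$ (with, e.g., truncation in $\varphi$ to invoke monotone or dominated convergence as appropriate) then recovers the sharp lower bound.

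For (ii), apply the same machinery to $\Hau$ directly with $\varphi \geq 0$: first replace $\varphi$ by $\varphi_N := \varphi\chi_{[1/N,N]}$ (for which (\ref{main inequality}) is trivial), use the lower bound above to obtain $\int_{1/N}^N t^{(1+\alpha)/p + \eps - 1}\varphi(t)\,dt \leq \|\mathscr H_{\varphi_N}\|$, and then exploit the decomposition $\Hau = \mathscr H_{\varphi_N} + \mathscr H_{\varphi - \varphi_N}$ together with the positivity $\varphi - \varphi_N \geq 0$ (manifest on the same family $f_{a,\eps}$, where the contributions align coherently as $a \to 0^+$) to obtain $\|\mathscr H_{\varphi_N}\| \leq \|\Hau\|$. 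Passing $N \to \infty$ by monotone convergence and then $\eps \to 0^+$ by Fatou's lemma yields $\int_0^\infty t^{(1+\alpha)/p - 1}\varphi(t)\,dt \leq \|\Hau\| < \infty$, which is (\ref{main inequality}). The technical crux throughout is balancing the blow-ups $\|f_{a,\eps}\|, \|\Hau f_{a,\eps}\| \sim a^{-\eps}$ precisely enough to extract the correct limiting ratio.
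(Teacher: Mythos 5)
This corollary needs no new argument beyond Theorem \ref{the main theorem 2}: for $\varphi\ge 0$ the lower and upper bounds in part (i) of that theorem coincide, and part (ii) gives the necessity. Your proposal instead re-proves the theorem, and although its skeleton (Minkowski for the upper bound, approximate dilation eigenfunctions for the lower bound, positivity for the necessity) matches the paper's, two steps fail as written. The first is the limit $a\to 0^+$ in your lower bound, which is vacuous. Writing $s=\frac{1+\alpha}{p}+\eps$, you have $f_{a,\eps}(z)=a^{-s}f_{1,\eps}(z/a)$; since $\Hau$ commutes with dilations, $\Hau\bigl(g(\lambda\cdot)\bigr)=(\Hau g)(\lambda\cdot)$, and since $\|g(\cdot/\lambda)\|_{\Hardy}=\lambda^{(1+\alpha)/p}\|g\|_{\Hardy}$, the ratio $\|\Hau f_{a,\eps}\|_{\Hardy}/\|f_{a,\eps}\|_{\Hardy}$ is \emph{independent of} $a$. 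Your claimed asymptotic as $a\to 0^+$ would therefore force the exact identity $\|\Hau f_{1,\eps}\|_{\Hardy}=\bigl|\int_0^\infty t^{s-1}\varphi(t)\,dt\bigr|\,\|f_{1,\eps}\|_{\Hardy}$ for every fixed $\eps$, which is false: $f_{a,\eps}$ is only an approximate eigenfunction, and the approximation improves only as $\eps\to 0$. The paper's mechanism is to truncate $\varphi$ to $\varphi_\delta$, use the Lagrange mean value theorem to show that $\mathscr H_{\varphi_\delta}f_\eps-f_\eps\int_0^\infty t^{\frac{1+\alpha}{p}-1}\varphi_\delta(t)\,dt$ has norm bounded (for fixed $\delta$) while $\|f_\eps\|_{\Hardy}\sim\eps^{-1/p}\to\infty$, and only afterwards let $\delta\to 0$.

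The second gap is in the necessity: the phrase ``the contributions align coherently as $a\to 0^+$'' is precisely what must be proved. Even for $\varphi\ge 0$, the integral $\int_0^\infty\Phi_\eps(z/t)\frac{\varphi(t)}{t}\,dt$ can suffer cancellation because $\arg\Phi_\eps(z/t)$ varies with $t$; in particular, monotonicity of the operator norm in $\varphi$ (your claim $\|\mathscr H_{\varphi_N}\|\le\|\Hau\|$) does not follow from positivity of the kernel alone. The paper's resolution is Lemma \ref{14:44, 08/7/2021}: on a thin sector $A_{(\frac{\pi}{2}-\eps(p,\alpha),\frac{\pi}{2})}$ near the positive imaginary axis, either $\Im\Phi_\eps$ or $\Re\Phi_\eps$ (the choice depending on whether $\frac{1+\alpha}{p}$ is an even integer) has constant sign and is comparable to $|\Phi_\eps|$, so the modulus of the integral dominates the integral of the modulus there; the lower bound $\int_0^{1/\eps}t^{\frac{1+\alpha}{p}-1+\eps}\varphi(t)\,dt\lesssim\eps^{-\eps}\|\Hau\|$ is then extracted by integrating in polar coordinates over the truncated sector, with no intermediate truncation of $\varphi$ needed. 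Without this sign analysis your deduction of (\ref{main inequality}) from boundedness is unsupported. (Minor omission: for $p=\infty$ the necessity is immediate by testing on $f\equiv 1$.)
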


\begin{remark}
	Our second main result generalizes some recent results in \cite{AS, BBMM, HKQ20}.
\end{remark}

It is well-known (see \cite[THEOREM II.1.1]{Ga} and \cite[Theorem 5.3]{Garn}) that if $f\in \Hardy$, then $f$ has a {\it boundary value function} $f^*\in L^p_{|\cdot|^\alpha}(\R)$ defined by
\begin{equation}\label{22:32, 20/11/2021}
	f^*(x)=\lim_{y\to 0} f(x+iy),\quad \mbox{a.e. $x\in\R$}.
\end{equation}
If $f^*\in L^p_{|\cdot|^\alpha}(\R)$ is the boundary function of $f\in \Hardy$, the question arises whether the transformed function $\H_{\varphi}(f^*)$ is also the
boundary function of a function in $\Hardy$? As a consequence of Theorem \ref{the main theorem 2}, we obtain a positive answer to this question.

\begin{theorem}\label{16:37, 26/4/2025}
	Let $1\leq p\leq \infty$, $\alpha>-1$, and let $\varphi$ be a measurable function on $(0,\infty)$ such that (\ref{main inequality}) holds. Then, for every $f\in \Hardy$,
	$$(\Hau f)^*= \mathcal H_\varphi(f^*).$$
\end{theorem}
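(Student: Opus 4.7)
By Theorem \ref{the main theorem 2}, $\Hau f\in\Hardy$, so the boundary function $(\Hau f)^*$ exists a.e.\ on $\R$. The plan is to identify this boundary function by moving the limit $y\to 0^+$ inside the integral defining $\Hau f$, thereby producing the real-variable operator $\H_\varphi$ acting on $f^*$; hypothesis \eqref{main inequality} will supply the dominating function that justifies this passage.

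For the range $1\leq p<\infty$ I would argue by $L^p_{|\cdot|^\alpha}$-convergence. A preliminary application of Minkowski's integral inequality together with the dilation $x\mapsto tx$ shows that $\H_\varphi$ maps $L^p_{|\cdot|^\alpha}(\R)$ into itself with norm bounded by the integral in \eqref{main inequality}, so $\H_\varphi(f^*)$ already belongs to $L^p_{|\cdot|^\alpha}$. Writing
\[
\Hau f(x+iy)-\H_\varphi(f^*)(x)=\int_0^\infty\Bigl[f\bigl(\tfrac{x+iy}{t}\bigr)-f^*\bigl(\tfrac{x}{t}\bigr)\Bigr]\frac{\varphi(t)}{t}\,dt,
\]
a second application of Minkowski bounds the $L^p_{|\cdot|^\alpha}$-norm of the left side by
\[
\int_0^\infty t^{\frac{1+\alpha}{p}-1}\bigl\|f(\cdot+i\tfrac{y}{t})-f^*\bigr\|_{L^p_{|\cdot|^\alpha}}|\varphi(t)|\,dt.
\]
For each fixed $t>0$ the inner norm tends to $0$ as $y\to 0^+$ by the standard $L^p$-boundary convergence in $\Hardy$, and it is uniformly majorized by $2\|f\|_{\Hardy}$; dominated convergence, legitimized by \eqref{main inequality}, forces the integral to $0$. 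Combined with the trivial $L^p_{|\cdot|^\alpha}$-convergence $\Hau f(\cdot+iy)\to(\Hau f)^*$, uniqueness of $L^p$-limits yields the desired identity a.e.

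For $p=\infty$ the hypothesis reduces to $\int_0^\infty|\varphi(t)|t^{-1}\,dt<\infty$ and the argument becomes pointwise: the uniform estimate $|f((x+iy)/t)|\leq\|f\|_{\H^\infty_{|\cdot|^\alpha}(\bC_+)}$ is a $t$-integrable majorant against $|\varphi(t)|/t$, and a Fubini argument applied to the null set where $f$ does not converge non-tangentially to $f^*$ shows that, for a.e.\ $x\in\R$, one has $\lim_{y\to 0^+}f((x+iy)/t)=f^*(x/t)$ for a.e.\ $t>0$. Dominated convergence inside the defining integral of $\Hau f(x+iy)$ then produces the pointwise identity.

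The main technical hurdle will be the $L^p_{|\cdot|^\alpha}$-boundary convergence $f(\cdot+iy)\to f^*$ in $\Hardy$ invoked in the $p<\infty$ case: classical in the unweighted setting, it has to be extracted from (or patterned after) Garc\'ia-Cuerva's treatment cited in the paper, since the power weight $|x|^\alpha$ need not lie in any Muckenhoupt $A_p$ class throughout the full range $\alpha>-1$.
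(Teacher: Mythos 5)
Your argument is correct, and for the main case $1\leq p<\infty$ it takes a genuinely different, more direct route than the paper. The paper proves the identity first for a doubly regularized pair --- the truncated kernel $\varphi_\delta=\varphi\chi_{(\delta,1/\delta)}$ acting on the shifted function $f_\delta=f(\cdot+\delta i)$ --- where pointwise dominated convergence can be justified via a lower bound on $w_\alpha(B(x/t,\delta))$ (Lemma \ref{07:49, 21/11/2021}); it then removes both regularizations by a four-term triangle inequality controlled by the operator norms from Theorem \ref{the main theorem 2} and Lemma \ref{17:12, 22/11/2021} together with the boundary convergence of Lemma \ref{10:18, 21/11/2021}. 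You instead estimate $\|\Hau f(\cdot+iy)-\H_\varphi(f^*)\|_{L^p_{|\cdot|^\alpha}(\R)}$ in one stroke by Minkowski's integral inequality and the dilation identity, reducing everything to $\int_0^\infty t^{\frac{1+\alpha}{p}-1}\|f(\cdot+i\frac{y}{t})-f^*\|_{L^p_{|\cdot|^\alpha}(\R)}|\varphi(t)|\,dt$, which tends to $0$ by dominated convergence in $t$; this dispenses with Lemma \ref{07:49, 21/11/2021} and with the $\delta$-limiting step entirely. Both proofs rest on the same essential external input, namely Garc\'ia-Cuerva's boundary convergence $\|f(\cdot+iy)-f^*\|_{L^p_{|\cdot|^\alpha}(\R)}\to 0$, which you correctly flag as the technical crux; note that the paper already imports it as Lemma \ref{10:18, 21/11/2021} (and the weight $|x|^\alpha$ is always in $A_\infty(\R)$ for $\alpha>-1$, which is what that theory requires, even though it may fail to be in $A_p(\R)$ for the given $p$). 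Your $p=\infty$ case coincides with the paper's, with the Fubini argument on the exceptional null set made explicit. The only cosmetic caveat is that the uniform majorant $2\|f\|_{\Hardy}$ for the inner norm should be justified by Fatou's lemma (giving $\|f^*\|_{L^p_{|\cdot|^\alpha}(\R)}\leq\|f\|_{\Hardy}$) or by the equivalence in Lemma \ref{10:18, 21/11/2021}; either suffices.
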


The following result is another consequence of Theorem \ref{the main theorem 2}.

\begin{theorem}\label{16:42, 26/4/2025}
	Let $1<p<\infty$, $\alpha>-1$, and let $\varphi$ be a measurable function on $(0,\infty)$ such that (\ref{main inequality}) holds. Then:
	\begin{enumerate}[\rm (i)]
		\item $\H_\varphi$ is bounded on $L^p_{|\cdot|^\alpha}(\R)$, moreover,
		$$\left|\int_{0}^{\infty} t^{\frac{1+\alpha}{p}-1} \varphi(t) dt\right|\leq \|\H_\varphi\|_{L^p_{|\cdot|^\alpha}(\R)\to L^p_{|\cdot|^\alpha}(\R)}\leq \int_{0}^{\infty} t^{\frac{1+\alpha}{p}-1} |\varphi(t)| dt.$$
		
		\item If $-1<\alpha<p-1$, then $\H_\varphi$ commutes with the Hilbert transform $H$ on $L^p_{|\cdot|^\alpha}(\R)$.
	\end{enumerate} 
\end{theorem}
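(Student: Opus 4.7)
My plan for part (i) is as follows. The upper bound comes from Minkowski's integral inequality applied to
\[
\H_\varphi f(x)=\int_0^\infty f(x/t)\,\frac{\varphi(t)}{t}\,dt,
\]
combined with the scaling identity $\|f(\cdot/t)\|_{L^p_{|\cdot|^\alpha}(\R)}=t^{(1+\alpha)/p}\|f\|_{L^p_{|\cdot|^\alpha}(\R)}$, obtained by the substitution $y=x/t$. For the lower bound, I would transfer the Hardy-space bound of Theorem~\ref{the main theorem 2}(i) as follows. The boundary-value map $f\mapsto f^*$ is an isometric embedding of $\Hardy$ into $L^p_{|\cdot|^\alpha}(\R)$, and by Theorem~\ref{16:37, 26/4/2025} it intertwines $\Hau$ on $\Hardy$ with $\H_\varphi$ on its image; hence
\[
\|\H_\varphi\|_{L^p_{|\cdot|^\alpha}\to L^p_{|\cdot|^\alpha}}\ \ge\ \|\Hau\|_{\Hardy\to\Hardy}\ \ge\ \Bigl|{\textstyle\int_0^\infty} t^{(1+\alpha)/p-1}\varphi(t)\,dt\Bigr|.
\]
A self-contained alternative tests on $f_n(x):=n^{-1/p}x^{-(1+\alpha)/p}\chi_{[1,e^n]}(x)\chi_{x>0}$: one has $\|f_n\|_{L^p_{|\cdot|^\alpha}}=1$, and the support condition in $\H_\varphi f_n(x)$ becomes $x/e^n\le t\le x$; for $x\in[e^{\sqrt n},e^{n-\sqrt n}]$ this interval exhausts $(0,\infty)$ as $n\to\infty$, so dominated convergence delivers $\liminf_n\|\H_\varphi f_n\|_{L^p_{|\cdot|^\alpha}}\ge|\int_0^\infty t^{(1+\alpha)/p-1}\varphi(t)\,dt|$.

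For part (ii), the crucial observation is that the Hilbert transform commutes with the dilations $T_tg(x):=g(x/t)$, $t>0$: the change of variables $u=y/t$ in the principal-value integral defining $H$ shows $H(T_tg)=T_t(Hg)$ a.e., because the kernel $1/(\pi y)$ is homogeneous of degree $-1$. Since part (i) yields absolute convergence of $\int_0^\infty T_tf\,\varphi(t)/t\,dt$ in $L^p_{|\cdot|^\alpha}(\R)$, and $H$ is bounded on this space by Remark~\ref{10:11, 18/11/2021}, the bounded operator $H$ may be pulled inside the Bochner integral, giving
\[
H(\H_\varphi f)=\int_0^\infty H(T_tf)\,\frac{\varphi(t)}{t}\,dt=\int_0^\infty T_t(Hf)\,\frac{\varphi(t)}{t}\,dt=\H_\varphi(Hf).
\]

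The step that most warrants care is the interchange of $H$ with the $t$-integral in part (ii): although formally trivial from linearity, it should be justified by interpreting $\H_\varphi f$ as a Bochner integral in $L^p_{|\cdot|^\alpha}(\R)$ (whose absolute convergence is furnished by (i)) so that the bounded linear operator $H$ commutes with the integral. A secondary, essentially bookkeeping, issue in the transfer argument for (i) is the isometric identity $\|f\|_{\Hardy}=\|f^*\|_{L^p_{|\cdot|^\alpha}(\R)}$, which is standard in Garc\'ia-Cuerva's framework \cite{Ga}; the direct-testing alternative above avoids this concern entirely.
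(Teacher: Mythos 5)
Your proof is essentially correct, but it follows a genuinely different route from the paper in both halves, and one of your two suggested arguments for the lower bound in (i) over-claims. The upper bound in (i) (Minkowski plus the dilation identity $\|f(\cdot/t)\|_{L^p_{|\cdot|^\alpha}(\R)}=t^{(1+\alpha)/p}\|f\|_{L^p_{|\cdot|^\alpha}(\R)}$) is exactly the paper's Lemma \ref{17:12, 22/11/2021}. For the lower bound, your first route assumes $f\mapsto f^*$ is an \emph{isometry} of $\Hardy$ into $L^p_{|\cdot|^\alpha}(\R)$; the paper's Lemma \ref{10:18, 21/11/2021} (Garc\'ia-Cuerva) only gives the two-sided equivalence $\|f^*\|_{L^p_{|\cdot|^\alpha}(\R)}\sim\|f\|_{\Hardy}$, and with mere equivalence the inequality $\|\H_\varphi\|\geq\|\Hau\|$ degrades by the ratio of the implied constants, so the sharp lower bound does not follow as stated. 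The paper circumvents this by transferring not the operator norm but the asymptotic test-function ratio: it shows $\|\H_{\varphi_\delta}((f_\varepsilon)^*)-(f_\varepsilon)^*\int_0^\infty t^{\frac{1+\alpha}{p}-1}\varphi_\delta\,dt\|_{L^p_{|\cdot|^\alpha}(\R)}/\|(f_\varepsilon)^*\|_{L^p_{|\cdot|^\alpha}(\R)}\to 0$ via the comparable Hardy-space quantity (\ref{17:34, 22/11/2021}), for which the equivalence suffices, and then removes the truncation $\varphi_\delta$. Your self-contained alternative with $f_n(x)=n^{-1/p}x^{-(1+\alpha)/p}\chi_{[1,e^n]}(x)$ is correct (one checks $\|f_n\|_{L^p_{|\cdot|^\alpha}(\R)}=1$, $\H_\varphi f_n(x)=n^{-1/p}x^{-(1+\alpha)/p}\int_{xe^{-n}}^{x}t^{\frac{1+\alpha}{p}-1}\varphi(t)\,dt$ for $x>0$, and the inner integral converges to $\int_0^\infty t^{\frac{1+\alpha}{p}-1}\varphi(t)\,dt$ uniformly for $x\in[e^{\sqrt n},e^{n-\sqrt n}]$ by absolute convergence of (\ref{main inequality})); it is more elementary than the paper's argument and does not pass through the holomorphic theory at all. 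For (ii), your argument (homogeneity of the Hilbert kernel gives $HT_t=T_tH$, then pull the bounded operator $H$ through the absolutely convergent Bochner integral $\H_\varphi f=\int_0^\infty T_tf\,\varphi(t)t^{-1}dt$) is valid and self-contained; the paper instead truncates to $\varphi_\delta$, verifies $\int_0^\infty t^{1/q-1}|\varphi_\delta(t)|\,dt<\infty$, quotes \cite[Theorem 3.1]{HKQ20} to get the commutation on $L^q(\R)\cap L^p_{|\cdot|^\alpha}(\R)$, and concludes by letting $\delta\to0$ and using density. Your route avoids the external citation at the cost of justifying that the pointwise and Bochner integrals coincide (which is routine for $1<p<\infty$ by testing against the dual space).
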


\begin{corollary}
	Let $1<p<\infty$, $\alpha>-1$, and let $\varphi$ be a nonnegative measurable function on $(0,\infty)$ such that (\ref{main inequality}) holds. Then, $\H_\varphi$ is bounded on $L^p_{|\cdot|^\alpha}(\R)$, moreover,
	$$\|\H_\varphi\|_{L^p_{|\cdot|^\alpha}(\R)\to L^p_{|\cdot|^\alpha}(\R)}= \int_{0}^{\infty} t^{\frac{1+\alpha}{p}-1} \varphi(t) dt.$$
\end{corollary}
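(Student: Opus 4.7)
The plan is to deduce the corollary as an immediate specialization of Theorem~\ref{16:42, 26/4/2025}(i). That theorem yields the two-sided bound
\[
\left|\int_0^\infty t^{\frac{1+\alpha}{p}-1} \varphi(t)\, dt\right| \le \|\H_\varphi\|_{L^p_{|\cdot|^\alpha}(\R)\to L^p_{|\cdot|^\alpha}(\R)} \le \int_0^\infty t^{\frac{1+\alpha}{p}-1} |\varphi(t)|\, dt.
\]
Under the nonnegativity hypothesis $\varphi\ge 0$, the integrand $t^{(1+\alpha)/p-1}\varphi(t)$ is pointwise nonnegative, so the outer absolute value on the left-hand side is redundant, and $|\varphi(t)|$ on the right-hand side may be replaced by $\varphi(t)$. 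The two ends of the sandwich therefore collapse to the same quantity $\int_0^\infty t^{(1+\alpha)/p-1}\varphi(t)\,dt$, forcing the announced equality of the operator norm.

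This mirrors exactly the mechanism by which Corollary~\ref{07:43, 19/11/2021} is extracted from Theorem~\ref{the main theorem 1} in the Bergman setting, and there is no genuine obstacle to overcome; the whole content of the corollary is the observation that for nonnegative kernels the upper and lower norm estimates agree.

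If one preferred a self-contained proof that avoids the appeal to Theorem~\ref{16:42, 26/4/2025}, the upper bound could be obtained directly by Minkowski's integral inequality together with the dilation identity $\|f(\cdot/t)\|_{L^p_{|\cdot|^\alpha}(\R)} = t^{(1+\alpha)/p}\|f\|_{L^p_{|\cdot|^\alpha}(\R)}$, which follows from the substitution $u=x/t$. The matching lower bound, for $\varphi \ge 0$, would then be recovered by testing $\H_\varphi$ on truncations approximating the formal scale-invariant extremizer $|x|^{-(1+\alpha)/p}$: since $\varphi$ is nonnegative, all contributions to $\H_\varphi f$ add constructively, and the Minkowski bound is asymptotically saturated as the truncation is removed. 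The only delicate point in such a direct argument is selecting the truncation so that the ratio $\|\H_\varphi f\|/\|f\|$ tends to the target integral; this is precisely where the hypothesis (\ref{main inequality}) enters, guaranteeing convergence of the limiting integrals.
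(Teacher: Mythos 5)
Your argument is correct and coincides with the paper's intended proof: the corollary is an immediate consequence of Theorem \ref{16:42, 26/4/2025}(i), since for $\varphi\geq 0$ the lower and upper bounds in the two-sided norm estimate both reduce to $\int_0^\infty t^{\frac{1+\alpha}{p}-1}\varphi(t)\,dt$. The additional self-contained sketch is unnecessary but does not detract from the argument.
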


\begin{remark}
	\begin{enumerate}[\rm (i)]
		\item Theorem \ref{16:37, 26/4/2025} generalizes some recent results in \cite{AS, HKQ20}.
		
		\item Theorem \ref{16:42, 26/4/2025} generalizes some recent results in \cite{LMPS, MO}.
	\end{enumerate}
\end{remark}

The organization of the paper is as follows. In Section 2, we present some results concerning the test functions $\Phi_\varepsilon(z)=\frac{1}{(z+ \varepsilon i)^{\frac{1+\alpha}{p}+\varepsilon}}$ with $\varepsilon>0$. In Section 3, we give the proofs of Theorems \ref{the main theorem 1} and \ref{16:22, 26/4/2025}. Section 4 is devoted to the proof of Theorem \ref{the main theorem 2}. Finally, Section 5 provides some applications of Theorem  \ref{the main theorem 2} in studying the Hausdorff operators $\mathcal H_\varphi$ on the real function spaces; the proofs of Theorems \ref{16:37, 26/4/2025} and \ref{16:42, 26/4/2025} are also given in this section.

Throughout the whole article, we use the symbol $A \lesssim B$ (or $B\gtrsim A$) means that $A\leq C B$ where $C$ is a positive constant which is independent of the main parameters, but it may vary from line to line. If $A \lesssim B$ and $B\lesssim A$, then we  write $A\sim B$.  For any $E\subset \R$, we denote by $\chi_E$ its characteristic function. For any $z\in\bC$, we denote by $\Re(z)$, $\Im(z)$ and $\arg(z)$ its real part, imaginary part and argument, respectively.

\section{Test functions}

Let $1\leq p<\infty$ and $\alpha>-1$. For any $\varepsilon>0$, following \cite{HKQ20, St},  we consider the test functions which are defined as follows 
\begin{equation}\label{key test function}
\Phi_\varepsilon(z)=\frac{1}{(z+ \varepsilon i)^{\frac{1+\alpha}{p}+\varepsilon}}
\end{equation}
and
\begin{equation}\label{15:56, 06/7/2021}
\psi_\varepsilon(z)= \frac{\overline{z+ \varepsilon i}}{|z+ \varepsilon i|}
\end{equation}
for every $z\in \bC_+$. For any $-\pi\leq a<b\leq \pi$, set
\begin{equation}\label{15:00, 05/7/2021}
A_{(a,b)}:=\left\{z\in\mathbb C_+: a < \arg(z) < b\right\}
\end{equation}
and 
\begin{equation}\label{15:02, 05/7/2021}
S_{(a,b)}:=\left\{z\in A_{(a,b)}: |z|\geq 1\right\}.
\end{equation}

\begin{remark}\label{10:09, 08/7/2021}
	Let $1\leq p<\infty$ and $\alpha>-1$. Then:
	\begin{enumerate}[\rm (i)]
		\item For any $0<\varepsilon<\frac{\pi}{2}$, we have
		$$\|\Phi_\varepsilon\|_{\Bergman} \sim \varepsilon^{-(\frac{1}{p}+\varepsilon)}$$
		and
		$$\|\Phi_\varepsilon\|_{\Hardy} \sim \varepsilon^{-(\frac{1}{p}+\varepsilon)},$$
		where the positive constants $C$ depend only on $p$ and $\alpha$.
		
		\item For any $\varepsilon>0$, we have $$\Re[\Phi_\varepsilon(z)]=\cos\left[\left(\frac{1+\alpha}{p}+\varepsilon\right)\arg(\psi_\varepsilon(z))\right] |\Phi_\varepsilon(z)|$$
		and $$\Im[\Phi_\varepsilon(z)]=\sin\left[\left(\frac{1+\alpha}{p}+\varepsilon\right)\arg(\psi_\varepsilon(z))\right] |\Phi_\varepsilon(z)|.$$
		Furthermore, if $\varepsilon_0\in (0,\frac{\pi}{2})$ and $z\in A_{(\frac{\pi}{2}-\varepsilon_0, \frac{\pi}{2})}$, then
		$$- \frac{\pi}{2}<\arg(\psi_\varepsilon(z))<\varepsilon_0 - \frac{\pi}{2}.$$
	\end{enumerate}	
\end{remark}

In order to prove Theorems \ref{the main theorem 1} and \ref{the main theorem 2}, we need the following key lemma.

\begin{lemma}\label{14:44, 08/7/2021}
	Let $1\leq p<\infty$ and $\alpha>-1$.
	\begin{enumerate}[\rm (i)]
		\item If $\frac{1+\alpha}{p}\notin \{2n: n\in\bN\}$, then there exist two constants $\varepsilon(p,\alpha)\in (0,\frac{\pi}{2})$ and $C(p,\alpha)>0$ such that 
		$$\begin{cases}
			\mbox{$\Im(\Phi_\varepsilon)$ has constant sign on $A_{(\frac{\pi}{2}-\varepsilon(p,\alpha), \frac{\pi}{2})}$ for all $0<\varepsilon<\varepsilon(p,\alpha)$},\\
			\mbox{$|\Im\left[\Phi_\varepsilon(z)\right]|\geq C(p,\alpha) \left|\Phi_\varepsilon(z)\right|$ for all $z\in A_{(\frac{\pi}{2}-\varepsilon(p,\alpha), \frac{\pi}{2})}$}.
		\end{cases}$$
		
		\item If $\frac{1+\alpha}{p}\in \{2n: n\in\bN\}$, then there exist two constants $\varepsilon(p,\alpha)\in (0,\frac{\pi}{2})$ and $C(p,\alpha)>0$ such that 
		$$\begin{cases}
		\mbox{$\Re(\Phi_\varepsilon)$ has constant sign on $A_{(\frac{\pi}{2}-\varepsilon(p,\alpha), \frac{\pi}{2})}$ for all $0<\varepsilon<\varepsilon(p,\alpha)$},\\
		\mbox{$|\Re\left[\Phi_\varepsilon(z)\right]|\geq C(p,\alpha) \left|\Phi_\varepsilon(z)\right|$ for all $z\in A_{(\frac{\pi}{2}-\varepsilon(p,\alpha), \frac{\pi}{2})}$}.
		\end{cases}$$
	\end{enumerate}
\end{lemma}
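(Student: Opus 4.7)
The plan is to reduce both assertions to a uniform continuity estimate for $\sin$ or $\cos$ near the limiting angle $-\tfrac{\pi}{2}\cdot\tfrac{1+\alpha}{p}$ obtained as $\varepsilon\to 0^+$ and $\arg(z)\to (\pi/2)^-$. Writing $\beta_0 := \tfrac{1+\alpha}{p}$, $\beta := \beta_0 + \varepsilon$ and $\theta := \arg(\psi_\varepsilon(z))$, Remark \ref{10:09, 08/7/2021}(ii) expresses $\Re[\Phi_\varepsilon(z)]$ and $\Im[\Phi_\varepsilon(z)]$ as $\cos(\beta\theta)$ and $\sin(\beta\theta)$ times $|\Phi_\varepsilon(z)|$, and gives $\theta\in(-\pi/2,\,-\pi/2+\varepsilon_0)$ whenever $z\in A_{(\pi/2-\varepsilon_0,\pi/2)}$. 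A one-line expansion then yields
\[
\bigl|\beta\theta + \tfrac{\pi\beta_0}{2}\bigr|
= \bigl|\beta_0(\theta+\tfrac{\pi}{2}) + \varepsilon\theta\bigr|
\leq \beta_0\varepsilon_0 + \tfrac{\pi}{2}\varepsilon,
\]
so by taking a single $\varepsilon(p,\alpha)$ small enough and letting it serve both as the bound on $\varepsilon$ and as the opening of the sector, one can pin $\beta\theta$ arbitrarily close to the target value $-\pi\beta_0/2$.

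For case (i), since $\beta_0>0$ and $\beta_0\notin 2\bN$, the angle $-\pi\beta_0/2$ is not an integer multiple of $\pi$, so $\sin(-\pi\beta_0/2)\neq 0$. I would set $C(p,\alpha) := \tfrac{1}{2}|\sin(\pi\beta_0/2)|$, pick $\delta>0$ so that on the interval $(-\pi\beta_0/2-\delta,\,-\pi\beta_0/2+\delta)$ the function $\sin t$ keeps the sign of $\sin(-\pi\beta_0/2)$ and satisfies $|\sin t|\geq 2C(p,\alpha)$, and then choose $\varepsilon(p,\alpha)\in(0,\pi/2)$ so small that $(\beta_0+\pi/2)\,\varepsilon(p,\alpha) < \delta$. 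For any such $\varepsilon$ and any $z$ in the corresponding sector, the displayed estimate puts $\beta\theta$ inside that $\delta$-neighborhood, and multiplying by $|\Phi_\varepsilon(z)|$ gives the desired sign and lower bound for $\Im[\Phi_\varepsilon]$.

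Case (ii) is handled identically, with $\sin$ replaced by $\cos$ and with the observation that when $\beta_0=2n$ the target value becomes $-\pi n$, at which $\cos(-\pi n)=(-1)^n$ is bounded away from $0$; the same continuity argument then yields constant sign of $\Re[\Phi_\varepsilon]$ and the pointwise bound, e.g.\ with $C(p,\alpha):=1/2$.

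I do not expect a real obstacle: once the trigonometric representations of $\Re[\Phi_\varepsilon]$ and $\Im[\Phi_\varepsilon]$ from Remark \ref{10:09, 08/7/2021}(ii) are in hand, the entire proof is a uniform continuity estimate for a fixed trigonometric function. The only mild bookkeeping is arranging a single $\varepsilon(p,\alpha)$ that simultaneously bounds $\varepsilon$ and the aperture of $A_{(\pi/2-\varepsilon(p,\alpha),\pi/2)}$, but the explicit choice $\varepsilon(p,\alpha):=\min\{\pi/4,\,\delta/(\beta_0+\pi/2+1)\}$ suffices in both cases.
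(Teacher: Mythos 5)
Your argument is correct and uses the same two ingredients as the paper's proof: the polar representation of $\Re[\Phi_\varepsilon]$ and $\Im[\Phi_\varepsilon]$ from Remark~\ref{10:09, 08/7/2021}(ii), and the fact that $\arg(\psi_\varepsilon(z))$ is confined to $(-\tfrac{\pi}{2},-\tfrac{\pi}{2}+\varepsilon_0)$ on the thin sector. The packaging, however, is genuinely different. The paper splits into the four cases $\tfrac{1+\alpha}{p}\in(4k,4k+2)$, $\in(4k+2,4k+4)$, $=4k$, $=4k+2$ and checks by explicit interval arithmetic that $(\tfrac{1+\alpha}{p}+\varepsilon)\arg(\psi_\varepsilon(z))$ stays inside a fixed compact interval avoiding the zeros of $\sin$ (resp.\ $\cos$), defining $C(p,\alpha)$ as a minimum over that interval. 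You instead pin $(\tfrac{1+\alpha}{p}+\varepsilon)\arg(\psi_\varepsilon(z))$ within $\delta$ of the single limiting value $-\tfrac{\pi}{2}\cdot\tfrac{1+\alpha}{p}$ via the estimate $|\beta\theta+\tfrac{\pi\beta_0}{2}|\leq(\beta_0+\tfrac{\pi}{2})\varepsilon(p,\alpha)$ and then invoke continuity of $\sin$ (resp.\ $\cos$), which is nonzero at that point precisely because $\tfrac{1+\alpha}{p}\notin 2\bN$ (resp.\ $\in 2\bN$). This unifies the case analysis and makes transparent why the dichotomy in (i)/(ii) is exactly the parity condition on $\tfrac{1+\alpha}{p}$; what the paper's route buys in exchange is fully explicit choices of $\varepsilon(p,\alpha)$ and $C(p,\alpha)$.

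One small slip: you require $|\sin t|\geq 2C(p,\alpha)=|\sin(\pi\beta_0/2)|$ throughout a $\delta$-neighborhood of $-\pi\beta_0/2$, which is not attainable in general since the value at the center is exactly $|\sin(\pi\beta_0/2)|$ and $|\sin|$ need not have a local maximum there. Continuity gives, and the final estimate only needs, $|\sin t|\geq C(p,\alpha)=\tfrac{1}{2}|\sin(\pi\beta_0/2)|$ on that neighborhood; with that one-character correction the proof goes through.
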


\begin{proof}

	(i) From $0< \frac{1+\alpha}{p}\notin \{2n: n\in\bN\}$, it follows that there exists $k=k(p,\alpha)\in \bN\cup\{0\}$ such that $\frac{1+\alpha}{p} \in (4k, 4k+2)\cup (4k+2, 4k+4)$.
	
	{\it Case 1:} $\frac{1+\alpha}{p} \in (4k, 4k+2)$. Then, there is a constant $\varepsilon(p,\alpha)\in (0,\pi/2)$ small enough for which
	$$2k\pi < \frac{1+\alpha}{p} \left(\frac{\pi}{2}-\varepsilon(p,\alpha)\right) \leq	\left(\frac{1+\alpha}{p}+\varepsilon(p,\alpha)\right)\frac{\pi}{2} <2k\pi +\pi.$$
	Hence,
	$$\begin{aligned}
		2k\pi <  \frac{1+\alpha}{p} \left(\frac{\pi}{2}-\varepsilon(p,\alpha)\right) &\leq\left(\frac{1+\alpha}{p}+\varepsilon\right)\theta\\
		&\leq \left(\frac{1+\alpha}{p}+\varepsilon(p,\alpha)\right)\frac{\pi}{2} <2k\pi +\pi
	\end{aligned}$$
	for all $0<\varepsilon<\varepsilon(p,\alpha)$ and all $\frac{\pi}{2}-\varepsilon(p,\alpha)<\theta<\frac{\pi}{2}$. This, together with Remark \ref{10:09, 08/7/2021}(ii), allows us to conclude that 
	$$\Im \left[\Phi_\varepsilon(z)\right]=\sin\left[\left(\frac{1+\alpha}{p}+\varepsilon\right)\arg(\psi_\varepsilon(z))\right]|\Phi_\varepsilon(z)|<0$$
	and
	$$|\Im \left[\Phi_\varepsilon(z)\right]|\geq C(p,\alpha) |\Phi_\varepsilon(z)|$$
	for all $0<\varepsilon<\varepsilon(p,\alpha)$ and all $z\in A_{(\frac{\pi}{2}-\varepsilon(p,\alpha), \frac{\pi}{2})}$, where 
	$$C(p,\alpha):= \min_{\frac{1+\alpha}{p} \left(\frac{\pi}{2}-\varepsilon(p,\alpha)\right)\leq \zeta\leq \left(\frac{1+\alpha}{p}+\varepsilon(p,\alpha)\right)\frac{\pi}{2}}\sin \zeta>0.$$
	
	{\it Case 2:} $\frac{1+\alpha}{p} \in (4k+2, 4k+4)$. Then, there is a constant $\varepsilon(p,\alpha)\in (0,\pi/2)$ small enough for which
	$$2k\pi +\pi< \frac{1+\alpha}{p} \left(\frac{\pi}{2}-\varepsilon(p,\alpha)\right)\leq \left(\frac{1+\alpha}{p}+\varepsilon(p,\alpha)\right)\frac{\pi}{2} <2k\pi + 2\pi.$$
	Hence,
	$$\begin{aligned}
	2k\pi +\pi <  \frac{1+\alpha}{p} \left(\frac{\pi}{2}-\varepsilon(p,\alpha)\right) &\leq\left(\frac{1+\alpha}{p}+\varepsilon\right)\theta\\
	&\leq \left(\frac{1+\alpha}{p}+\varepsilon(p,\alpha)\right)\frac{\pi}{2} <2k\pi + 2\pi
	\end{aligned}$$
	for all $0<\varepsilon<\varepsilon(p,\alpha)$ and all $\frac{\pi}{2}-\varepsilon(p,\alpha)<\theta<\frac{\pi}{2}$.  This, together with Remark \ref{10:09, 08/7/2021}(ii), allows us to conclude that 
	$$\Im \left[\Phi_\varepsilon(z)\right]=\sin\left[\left(\frac{1+\alpha}{p}+\varepsilon\right)\arg(\psi_\varepsilon(z))\right] |\Phi_\varepsilon(z)|>0$$
	and
	$$|\Im \left[\Phi_\varepsilon(z)\right]|\geq C(p,\alpha) |\Phi_\varepsilon(z)|$$
	for all $0<\varepsilon<\varepsilon(p,\alpha)$ and all $z\in A_{(\frac{\pi}{2}-\varepsilon(p,\alpha), \frac{\pi}{2})}$,	where 
	$$C(p,\alpha):= \min_{\frac{1+\alpha}{p} \left(\frac{\pi}{2}-\varepsilon(p,\alpha)\right)\leq \zeta\leq \left(\frac{1+\alpha}{p}+\varepsilon(p,\alpha)\right)\frac{\pi}{2}}(-\sin \zeta)>0.$$
	
	(ii) Let us consider the following two cases.
	
	{\it Case 1:} There is a constant $k=k(p,\alpha)\in \bN$ such that $\frac{1+\alpha}{p}=4k$. Set $\varepsilon(p,\alpha):=\frac{p\pi}{3(1+\alpha)}=\frac{\pi}{12k}\in (0,\frac{2}{3}) \subset (0,\frac{\pi}{2})$, then
	$$2k\pi - \frac{\pi}{3} \leq\left(\frac{1+\alpha}{p}+\varepsilon\right)\theta\leq 2k\pi + \frac{\pi}{3}$$
	for all $0<\varepsilon<\varepsilon(p,\alpha)$ and all $\frac{\pi}{2}-\varepsilon(p,\alpha)<\theta< \frac{\pi}{2}$. This, together with Remark \ref{10:09, 08/7/2021}(ii), allows us to conclude that 	
	$$\Re\left[\Phi_\varepsilon(z)\right]=\cos\left[\left(\frac{1+\alpha}{p}+\varepsilon\right)\arg(\psi_\varepsilon(z))\right] |\Phi_\varepsilon(z)|\geq \frac{1}{2}|\Phi_\varepsilon(z)|>0,$$ 
	and thus
	$$\left|\Re\left[\Phi_\varepsilon(z)\right]\right|\geq \frac{1}{2}|\Phi_\varepsilon(z)|$$ 
	for all $0<\varepsilon<\varepsilon(p,\alpha)$ and all $z\in A_{(\frac{\pi}{2}-\varepsilon(p,\alpha), \frac{\pi}{2})}$.
	
	{\it Case 2:} There is a constant $k=k(p,\alpha)\in \bN\cup\{0\}$ such that $\frac{1+\alpha}{p}=4k +2$. Set $\varepsilon(p,\alpha):=\frac{p\pi}{3(1+\alpha)}=\frac{\pi}{3(4k+2)}\in (0,\frac{2}{3}) \subset (0,\frac{\pi}{2})$, then
	$$2k\pi  +\frac{2\pi}{3} \leq\left(\frac{1+\alpha}{p}+\varepsilon\right)\theta\leq 2k\pi + \frac{4\pi}{3}$$
	for all $0<\varepsilon<\varepsilon(p,\alpha)$ and all $\frac{\pi}{2}-\varepsilon(p,\alpha)<\theta< \frac{\pi}{2}$. This, together with Remark \ref{10:09, 08/7/2021}(ii), allows us to conclude that 	
	$$\Re\left[\Phi_\varepsilon(z)\right]=\cos\left[\left(\frac{1+\alpha}{p}+\varepsilon\right)\arg(\psi_\varepsilon(z))\right] |\Phi_\varepsilon(z)|\leq -\frac{1}{2}|\Phi_\varepsilon(z)|<0,$$
	and thus
	$$\left|\Re\left[\Phi_\varepsilon(z)\right]\right|\geq \frac{1}{2}|\Phi_\varepsilon(z)|$$ 
	for all $0<\varepsilon<\varepsilon(p,\alpha)$ and all $z\in A_{(\frac{\pi}{2}-\varepsilon(p,\alpha), \frac{\pi}{2})}$.

\end{proof}

\section{Hausdorff operators $\Hau$ acting on $\mathcal A^p_\alpha(\bC_+)$ and $\mathcal D(\bC_+)$}

The main purpose of this section is to give the proofs of Theorems \ref{the main theorem 1} and \ref{16:22, 26/4/2025}. Recall (see, for example, \cite[Lemma 1.1]{Garn}) the classical Schwarz lemma for the unit disc $\mathbb D=\{z\in \bC : |z| < 1\}$ in the complex plane.

\begin{lemma}\label{the Schwarz lemma}
	Let $f: \mathbb D\to\bC$  be a holomorphic function such that $f(0)=0$ and $|f(z)|\leq 1$ for every $z\in \mathbb D$. Then, $|f'(0)|\leq 1$ and $|f(z)|\leq |z|$ for every $z\in \mathbb D$.
\end{lemma}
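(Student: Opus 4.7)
The plan is to apply the classical auxiliary-function trick: since $f(0)=0$, the function $f(z)/z$ has a removable singularity at the origin, and then the maximum modulus principle on shrinking disks forces the desired bound.

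First I would define
\[
g(z)=\begin{cases} f(z)/z & \text{if } z\in\mathbb D\setminus\{0\},\\ f'(0) & \text{if } z=0. \end{cases}
\]
Because $f$ is holomorphic on $\mathbb D$ with $f(0)=0$, its Taylor expansion around $0$ has the form $f(z)=f'(0)z+a_2 z^2+\cdots$, so $g$ agrees on a punctured neighbourhood of $0$ with a holomorphic function and $g(0)=f'(0)$; hence $g$ is holomorphic on all of $\mathbb D$ (the origin is a removable singularity).

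Next, fix $r\in(0,1)$ and consider the closed disk $\overline{D(0,r)}\subset\mathbb D$. On the boundary circle $|z|=r$ we have
\[
|g(z)|=\frac{|f(z)|}{r}\leq \frac{1}{r},
\]
by the hypothesis $|f|\le 1$. The maximum modulus principle applied to the holomorphic function $g$ on $\overline{D(0,r)}$ then yields $|g(z)|\le 1/r$ for every $z$ with $|z|\le r$.

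Finally I would let $r\to 1^{-}$: for each fixed $z\in\mathbb D$, choosing $r$ with $|z|<r<1$ gives $|g(z)|\le 1/r$, and passing to the limit shows $|g(z)|\le 1$ on $\mathbb D$. Unwinding the definition of $g$, this is exactly $|f(z)|\le |z|$ for $z\neq 0$ (which trivially holds at $z=0$ too) and $|f'(0)|=|g(0)|\le 1$. There is no real obstacle here beyond verifying the removability of the singularity at $0$ and invoking the maximum modulus principle correctly on disks of radius strictly less than $1$ before letting $r\to 1^-$.
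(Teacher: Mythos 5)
Your proof is correct and is exactly the classical argument (the removable singularity of $f(z)/z$ at the origin plus the maximum modulus principle on disks of radius $r<1$, then $r\to 1^-$); the paper does not prove this lemma itself but simply cites \cite[Lemma 1.1]{Garn}, where the same standard proof appears. Nothing is missing.
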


In what follows, denote by
\begin{equation}\label{07:18, 19/11/2021}
	\widetilde{\varphi}(t):=\frac{\varphi(t)}{t},\quad \forall \, t\in (0,\infty),
\end{equation}
and, for any $0<\delta<1$,
\begin{equation}\label{14:48, 21/11/2021}
	\varphi_\delta(t):= \varphi(t) \chi_{(\delta,1/\delta)}(t),\quad \forall \, t\in (0,\infty).
\end{equation}


\begin{lemma}\label{10:57, 03/7/2021}
	Let $1\leq p<\infty$, $\alpha>0$, and let $\varphi$ be such that (\ref{main inequality}) holds. Then:
	\begin{enumerate}[\rm (i)]
		\item $\Hau f$ is well-defined and  holomorphic on $\mathbb C_+$ for all $f\in\Bergman$. Moreover, 
		$$(\Hau f)'= \mathscr H_{\widetilde{\varphi}} (f').$$

		\item $\Hau$ is bounded on $\Bergman$, moreover,
		$$\left|\int_0^\infty t^{\frac{1+\alpha}{p}-1}\varphi(t)dt\right|\leq \|\Hau\|_{\Bergman\to\Bergman}\leq \int_0^\infty t^{\frac{1+\alpha}{p}-1}|\varphi(t)| dt.$$
	\end{enumerate}
\end{lemma}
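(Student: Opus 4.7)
The plan is to establish (i) and (ii) separately. For (i), I would rely on the standard pointwise estimate $|f(z)| \lesssim (\mathrm{Im}\, z)^{-(1+\alpha)/p}\|f\|_{\Bergman}$ (with implicit constant depending on $p$ and $\alpha$), obtained by applying the sub-mean-value inequality to $|f|^p$ on a disc of radius $(\mathrm{Im}\, z)/2$. Scaling yields $|f(z/t)|\lesssim t^{(1+\alpha)/p}(\mathrm{Im}\, z)^{-(1+\alpha)/p}\|f\|_{\Bergman}$, so under (\ref{main inequality}) the integral $\int_0^\infty f(z/t)\varphi(t)/t\,dt$ converges absolutely and locally uniformly in $z\in \bC_+$. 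Holomorphy of $\Hau f$ then follows by differentiation under the integral sign, justified by the analogous bound on $f'$ (with one extra factor of $(\mathrm{Im}\, z)^{-1}$). The identity $(\Hau f)'(z) = \mathscr H_{\widetilde\varphi}(f')(z)$ comes from writing $\partial_z[f(z/t)] = f'(z/t)/t$ under the integral and collecting the factors into $\widetilde\varphi(t)/t$ via (\ref{07:18, 19/11/2021}).

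For the upper bound in (ii), the change of variables $w = z/t$ gives the scaling identity $\|f(\cdot/t)\|_{\Bergman} = t^{(1+\alpha)/p}\|f\|_{\Bergman}$, and Minkowski's integral inequality then yields
\[
\|\Hau f\|_{\Bergman} \le \int_0^\infty \|f(\cdot/t)\|_{\Bergman} \frac{|\varphi(t)|}{t}\,dt = \|f\|_{\Bergman} \int_0^\infty t^{(1+\alpha)/p - 1}|\varphi(t)|\,dt.
\]

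For the lower bound, I would use the family $\Phi_\varepsilon$ from Section~2. Since $\Phi_\varepsilon(z/t) = t^{(1+\alpha)/p + \varepsilon}(z+\varepsilon it)^{-(1+\alpha)/p - \varepsilon}$, one obtains
\[
\Hau \Phi_\varepsilon(z) = \int_0^\infty t^{(1+\alpha)/p + \varepsilon - 1}(z+\varepsilon it)^{-(1+\alpha)/p - \varepsilon}\varphi(t)\,dt.
\]
I would first reduce to $\varphi$ of compact support via the truncation $\varphi_\delta$ of (\ref{14:48, 21/11/2021}), using the fact that $\|\mathscr H_{\varphi-\varphi_\delta}\|\to 0$ (from the upper bound already proved). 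For such compactly supported $\varphi$, a mean value estimate applied to $w\mapsto (z+iw)^{-(1+\alpha)/p-\varepsilon}$ shows that $\Hau\Phi_\varepsilon(z) = I_\varepsilon\Phi_\varepsilon(z)(1 + O(\varepsilon/|z|))$ for $|z|$ large compared with $\varepsilon$ times the support diameter, where $I_\varepsilon := \int_0^\infty t^{(1+\alpha)/p + \varepsilon - 1}\varphi(t)\,dt$. Combined with the concentration fact $\int_{\{|z|\ge R\}}|\Phi_\varepsilon|^p\,dA_\alpha / \|\Phi_\varepsilon\|^p_{\Bergman} \to 1$ as $\varepsilon\to 0$ for each fixed $R$ (both quantities being of order $(p\varepsilon)^{-1}$, consistent with Remark~\ref{10:09, 08/7/2021}(i)), letting first $\varepsilon\to 0$ and then $R\to\infty$ yields $\liminf_{\varepsilon\to 0}\|\Hau\Phi_\varepsilon\|/\|\Phi_\varepsilon\| \ge |I|$. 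Undoing the truncation $\varphi_\delta\to\varphi$ closes the argument. The main obstacle is making the pointwise asymptotic and the $L^p(dA_\alpha)$-mass concentration cooperate so as to preserve the sharp constant $|I|$, rather than being degraded to some $C(p,\alpha)|I|$ that would arise from a direct application of Lemma~\ref{14:44, 08/7/2021}.
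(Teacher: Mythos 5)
Your proposal is correct and follows essentially the same route as the paper: the Bergman point-evaluation bound plus a Schwarz/Cauchy-type derivative estimate for part (i), Minkowski's integral inequality with the scaling identity $\|f(\cdot/t)\|_{\Bergman}=t^{\frac{1+\alpha}{p}}\|f\|_{\Bergman}$ for the upper bound, and the truncation $\varphi_\delta$ together with the test functions $\Phi_\varepsilon$ and a Lagrange mean-value estimate for the lower bound. The only difference is one of bookkeeping in the lower bound: the paper estimates $\bigl\|\mathscr H_{\varphi_\delta}(f_\varepsilon)-f_\varepsilon\int_0^\infty t^{\frac{1+\alpha}{p}-1}\varphi_\delta(t)\,dt\bigr\|_{\Bergman}$ globally and divides by $\|f_\varepsilon\|_{\Bergman}\sim\varepsilon^{-1/p}\to\infty$, whereas you localize to $\{|z|\ge R\}$ and invoke concentration of the mass of $\|\Phi_\varepsilon\|^p_{\Bergman}$ at infinity (the bounded region contributes only $O(\log(1/\varepsilon))$, which is indeed negligible against $\varepsilon^{-1}$) — two implementations of the same estimate, both preserving the sharp constant.
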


\begin{proof}
	
	(i) It suffices to consider the case $f\ne 0$. Thanks to \cite[Proposition 1.3]{BBGNPR}, we have
	\begin{equation}\label{BBGNPR, Proposition 1.3}
	|f(z)|\leq C(p,\alpha) y^{-\frac{1+\alpha}{p}} \|f\|_{\Bergman},\quad\forall\, z=x+iy\in\mathbb C_+, 
	\end{equation}
	where the constant $C(p,\alpha)>0$ depends only on $p$ and $\alpha$. Therefore,
	$$\begin{aligned}
		|\Hau f(z)| &\leq \int_0^\infty |f(z/t)| \frac{|\varphi(t)|}{t} dt\\
		&\leq C(p,\alpha) y^{-\frac{1+\alpha}{p}} \|f\|_{\Bergman} \int_0^\infty t^{\frac{1+\alpha}{p} -1} |\varphi(t)| dt<\infty
	\end{aligned}$$
	for every $z=x+iy\in\bC_+$. This proves that $\Hau f$ is well-defined  on $\mathbb C_+$.

	 On the other hand, for every $z_0=x_0+i y_0\in \bC_+$, we define the holomorphic function $g: \mathbb D\to \bC$ as follows
	$$g(z)=\frac{f(z_0+ \frac{y_0}{2}z)-f(z_0)}{(1+2^{\frac{1+\alpha}{p}})C(p,\alpha) y_0^{-\frac{1+\alpha}{p}}\|f\|_{\Bergman}}.$$
	Then, it follows from (\ref{BBGNPR, Proposition 1.3}) and Lemma \ref{the Schwarz lemma} that
	$$\begin{aligned}
		|f'(z_0)|&=\frac{2}{y_0}(1+2^{\frac{1+\alpha}{p}})C(p,\alpha) y_0^{-\frac{1+\alpha}{p}}\|f\|_{\Bergman}|g'(0)|\\
		&\leq C_1(p,\alpha)  y_0^{-\frac{1+\alpha}{p}-1}\|f\|_{\Bergman},
	\end{aligned}$$
	where $C_1(p,\alpha):= (2+2^{\frac{1+\alpha}{p}+1})C(p,\alpha)$. This implies that
	$$
	|f'(z)|\leq C_1(p,\alpha) y^{-\frac{1+\alpha}{p}-1} \|f\|_{\Bergman}
	$$
	for all $z=x+iy\in\mathbb C_+$,	and thus,
	$$\begin{aligned}
	\left|\mathscr H_{\widetilde{\varphi}}(f')(z)\right| &\leq \int_0^\infty |f'(z/t)| \frac{|\varphi(t)|}{t^2} dt\\
	&\leq C_1(p,\alpha) y^{-\frac{1+\alpha}{p}-1}\|f\|_{\Bergman} \int_0^\infty t^{\frac{1+\alpha}{p} -1} |\varphi(t)| dt<\infty.
	\end{aligned}$$
	Therefore, using the Lebesgue dominated convergence theorem, we obtain 
	$$(\Hau f)'(z)= \int_0^\infty f'(z/t)\frac{\varphi(t)}{t^2} dt$$
	for all $z\in \bC_+$. This proves that $\Hau f$ is holomorphic on $\mathbb C_+$, moreover,
	$$(\Hau f)'= \mathscr H_{\widetilde{\varphi}} (f').$$

	(ii) By the Minkowski inequality, for every $f\in\Bergman$, 
	$$\begin{aligned}
		\|\Hau f\|_{\Bergman}&=\left[\int_{\bC_+} \left|\int_0^\infty f\left(\frac{z}{t}\right) \frac{\varphi(t)}{t} dt\right|^p dA_\alpha(z)\right]^{1/p}\\
		&\leq \|f\|_{\Bergman} \int_0^\infty t^{\frac{1+\alpha}{p}-1}|\varphi(t)|dt.
	\end{aligned}$$
	This implies that $\Hau$ is bounded on $\Bergman$, moreover,
	\begin{equation}\label{15:33, 01/7/2021}
		\|\Hau\|_{\Bergman\to\Bergman}\leq \int_0^\infty t^{\frac{1+\alpha}p-1}|\varphi(t)| dt.
	\end{equation}
	
	 Let $0<\delta<1$ and let $\varphi_\delta$ be as in (\ref{14:48, 21/11/2021}). Then, it follows from (\ref{15:33, 01/7/2021}) that
	$$
	\|\mathscr H_{\varphi_\delta}\|_{\Bergman\to\Bergman} \leq \int_0^\infty t^{\frac{1+\alpha}{p}-1} |\varphi_\delta(t)| dt = \int_\delta^{1/\delta} t^{\frac{1+\alpha}{p}-1}|\varphi(t)| dt  <\infty
	$$
	and
	\begin{align}\label{18:28, 01/7/2021}
		\|\Hau -\mathscr H_{\varphi_\delta}\|_{\Bergman\to\Bergman} &\leq \int_0^\infty t^{\frac{1+\alpha}{p}-1} |\varphi(t)-\varphi_\delta(t)| dt\nonumber\\
		&= \int_0^\delta t^{\frac{1+\alpha}{p}-1} |\varphi(t)| dt + \int_{1/\delta}^\infty t^{\frac{1+\alpha}{p}-1} |\varphi(t)| dt<\infty.
	\end{align}
	
	For any $0<\varepsilon<1$, let  
	$$f_\varepsilon(z):= \varepsilon^{\frac{1+\alpha}{p} +\varepsilon} \Phi_\varepsilon(\varepsilon z)=\frac{1}{(z+i)^{\frac{1+\alpha}{p} +\varepsilon}},\quad \forall\, z\in\bC_+.$$
	Then, by Remark \ref{10:09, 08/7/2021}(i), we have
	\begin{equation}\label{18:04, 01/7/2021}
	\|f_\varepsilon\|_{\Bergman} \sim \varepsilon^{-1/p},
	\end{equation}
	where the constants $C>0$ are independent of $\varepsilon$. Moreover,
	$$\mathscr H_{\varphi_\delta}(f_\varepsilon)(z) - f_\varepsilon(z)\int_0^\infty t^{\frac{1+\alpha}{p}-1}\varphi_\delta(t)dt = \int_{\delta}^{1/\delta}[\phi_{\varepsilon,z}(t)-\phi_{\varepsilon,z}(1)] t^{\frac{1+\alpha}{p}-1} \varphi(t)dt,$$
	where $\phi_{\varepsilon,z}(t):=  \frac{t^{\varepsilon}}{(z+ ti)^{\frac{1+\alpha}{p} +\varepsilon}}$. For every $t\in (\delta,1/\delta)$ and $z\in\bC_+$, the Lagrange mean value theorem gives
	\begin{eqnarray*}
		|\phi_{\varepsilon,z}(t)-\phi_{\varepsilon,z}(1)| &\leq& |t-1|\sup_{s\in (\delta,1/\delta)}|\phi_{\varepsilon,z}'(s)|\\
		&\leq& (\delta^{-1}-1)\left(\frac{\varepsilon \delta^{\varepsilon-1}}{|z+\delta i|^{\frac{1+\alpha}{p}+\varepsilon}} + \frac{(\frac{1+\alpha}{p}+\varepsilon)\delta^{-\varepsilon}}{|z+\delta i|^{1+\frac{1+\alpha}{p}+\varepsilon}}\right)\\
		&\leq& \frac{\varepsilon \delta^{-(2+\frac{1+\alpha}{p})}}{|z+ i|^{\frac{1+\alpha}{p}+\varepsilon}} + \frac{(\frac{1+\alpha}{p}+1) \delta^{-(4+\frac{1+\alpha}{p})}}{|z+ i|^{1+\frac{1+\alpha}{p}}}.
	\end{eqnarray*}
This, together with (\ref{18:04, 01/7/2021}),  yields
\begin{eqnarray*}\label{an estimate for the norm}
&&\frac{\left\|\mathscr H_{\varphi_\delta}(f_\varepsilon) - f_\varepsilon \int_0^\infty t^{\frac{1+\alpha}{p}-1}\varphi_\delta(t)dt \right\|_{\Bergman}}{\|f_\varepsilon\|_{\Bergman}}\\
&\lesssim& \int_{\delta}^{1/\delta} t^{\frac{1+\alpha}{p}-1}|\varphi(t)|dt\left[\varepsilon \delta^{-(2+\frac{1+\alpha}{p})} + \varepsilon^{1/p} \delta^{-(4+\frac{1+\alpha}{p})}\right] \to 0\nonumber
\end{eqnarray*}
as $\varepsilon\to 0$. As a consequence, we obtain
$$\left|\int_{\delta}^{1/\delta} t^{\frac{1+\alpha}{p}-1}\varphi(t)dt \right|= \left|\int_{0}^{\infty} t^{\frac{1+\alpha}{p}-1}\varphi_\delta(t)dt\right|\leq \|\mathscr H_{\varphi_\delta}\|_{\Bergman\to\Bergman}.$$
Therefore, by (\ref{18:28, 01/7/2021}) and $\int_0^\infty t^{\frac{1+\alpha}{p}-1} |\varphi(t)| dt<\infty$,  
$$\begin{aligned}
	&\quad\|\Hau\|_{\Bergman\to\Bergman}\\
	&\geq \left|\int_0^\infty t^{\frac{1+\alpha}{p}-1}\varphi(t)dt\right| - 2 \left(\int_0^\delta t^{\frac{1+\alpha}{p}-1}|\varphi(t)|dt + \int_{1/\delta}^\infty t^{\frac{1+\alpha}{p}-1}|\varphi(t)|dt \right)\to \left|\int_0^\infty t^{\frac{1+\alpha}{p}-1}\varphi(t)dt\right|
\end{aligned}
$$
as $\delta \to 0$. This, together with (\ref{15:33, 01/7/2021}), allows us to conclude that
$$\left|\int_0^\infty t^{\frac{1+\alpha}{p}-1}\varphi(t)dt\right|\leq \|\Hau\|_{\Bergman\to\Bergman}\leq \int_0^\infty t^{\frac{1+\alpha}{p}-1}|\varphi(t)| dt.$$
	
\end{proof}

Now we are ready to give the proofs of Theorems \ref{the main theorem 1} and \ref{16:22, 26/4/2025}.

\begin{proof}[Proof of Theorem \ref{the main theorem 1}]
	By Lemma \ref{10:57, 03/7/2021}, it suffices to show that if $\varphi\geq 0$ and $\Hau$ is bounded on $\Bergman$, then 
	\begin{equation}\label{09:46, 23/11/2021}
		\int_{0}^{\infty} t^{\frac{1+\alpha}{p}-1} \varphi(t) dt<\infty.
	\end{equation}
	Indeed, for any $0<\varepsilon<\frac{\pi}{2}$ and $-\pi\leq a<b \leq \pi$, let $\Phi_\varepsilon$, $A_{(a,b)}$ and $S_{(a,b)}$ be as in (\ref{key test function}), (\ref{15:00, 05/7/2021}) and (\ref{15:02, 05/7/2021}), respectively. Now let us consider the following two cases.
	
	{\it Case 1:} $\frac{1+\alpha}{p}\notin \{2n: n\in\bN\}$. Then, by Lemma \ref{14:44, 08/7/2021}(i), there exist two constants $\varepsilon(p,\alpha)\in (0,\frac{\pi}{2})$ and $C(p,\alpha)>0$ such that 
	$$\begin{cases}
	\mbox{$\Im(\Phi_\varepsilon)$ has constant sign on $A_{(\frac{\pi}{2}-\varepsilon(p,\alpha), \frac{\pi}{2})}$ for all $0<\varepsilon<\varepsilon(p,\alpha)$},\\
	\mbox{$|\Im\left[\Phi_\varepsilon(z)\right]|\geq C(p,\alpha) \left|\Phi_\varepsilon(z)\right|$ for all $z\in A_{(\frac{\pi}{2}-\varepsilon(p,\alpha), \frac{\pi}{2})}$}.
	\end{cases}$$
	Therefore, by $\varphi$ being a nonnegative function on $(0,\infty)$,
	$$\left|\int_0^\infty \Im \left[\Phi_\varepsilon\left(\frac{z}{t}\right)\right]\frac{\varphi(t)}tdt\right|= \int_0^\infty \left|\Im\left[\Phi_\varepsilon\left(\frac{z}{t}\right)\right]\right|\frac{\varphi(t)}{t} dt$$
	and	
	$$\begin{aligned}
		\left|\Im\left[\Phi_\varepsilon\left(\frac{z}{t}\right)\right]\right|\geq C(p,\alpha) \left|\Phi_\varepsilon\left(\frac{z}{t}\right)\right| &= C(p,\alpha) \frac{t^{\frac{1+\alpha}{p}+\varepsilon}}{\sqrt{r^2+2(r\sin\theta)t\varepsilon +(t\varepsilon)^2}^{\frac{1+\alpha}{p}+\varepsilon}}\\
		&\geq C(p,\alpha) \frac{t^{\frac{1+\alpha}{p}+\varepsilon}}{(r+t\varepsilon)^{\frac{1+\alpha}{p}+\varepsilon}}
	\end{aligned}$$
	for every $0<\varepsilon<\varepsilon(p,\alpha)$, $z=r e^{i\theta}\in A_{(\frac{\pi}{2}-\varepsilon(p,\alpha),\frac{\pi}{2})}$ and $t>0$. Thus, using polar coordinates over the truncated sector $S_{(\frac{\pi}{2}-\varepsilon(p,\alpha),\frac{\pi}{2})}$,
	$$\begin{aligned}
		\|{\mathscr H}_\varphi\left(\Phi_{\varepsilon}\right)\|^p_{{\mathcal A}_{\alpha}^p\left(\mathbb C_+\right)}&\geq \|\Im\left[{\mathscr H}_\varphi\left(\Phi_{\varepsilon}\right)\right]\|^p_{{\mathcal A}_{\alpha}^p\left(\mathbb C_+\right)}\\
		&= \|{\mathscr H}_\varphi\left(\Im(\Phi_{\varepsilon})\right)\|^p_{{\mathcal A}_{\alpha}^p\left(\mathbb C_+\right)}\\
		&\geq \int_{S_{(\frac{\pi}{2}-\varepsilon(p,\alpha),\frac{\pi}{2})}} \left|\int_0^\infty \Im\left[\Phi_\varepsilon\left(\frac{z}{t}\right)\right]\frac{\varphi(t)}tdt\right|^{p}dA_{\alpha}(z)\\
		&\gsim \int_{S_{(\frac{\pi}{2}-\varepsilon(p,\alpha),\frac{\pi}{2})}} \left(\int_0^\infty \frac{t^{\frac{1+\alpha}{p}+\varepsilon}}{(r+t\varepsilon)^{\frac{1+\alpha}{p}+\varepsilon}}\frac{\varphi(t)}tdt\right)^{p}dA_{\alpha}(z)\\
		&\geq \int_1^\infty\int_{\frac{\pi}{2}-\varepsilon(p,\alpha)}^{\frac{\pi}2}\left(\int_0^{1/\varepsilon} \frac{t^{\frac{1+\alpha}{p}+\varepsilon}}{(r+t\varepsilon)^{\frac{1+\alpha}{p}+\varepsilon}}  \frac{\varphi(t)}{t}dt \right)^p r^{\alpha+1}(\sin\theta)^{\alpha}  dr d\theta\\
		&\gsim \int_1^\infty\frac{1}{r^{1+p\varepsilon}}dr\int_{\frac{\pi}{2}-\varepsilon(p,\alpha)}^{\frac{\pi}{2}}  (\sin\theta)^{\alpha} d\theta \left(\int_0^{1/\varepsilon} t^{\frac{1+\alpha}{p}-1+\varepsilon} \varphi(t) dt \right)^p \\
		&\gsim \varepsilon^{-1} \left(\int_0^{1/\varepsilon} t^{\frac{1+\alpha}{p}-1+\varepsilon} \varphi(t) dt \right)^p, 
	\end{aligned}$$
	where the positive constants $C$ depend only on $p$ and $\alpha$. Thus, by Remark \ref{10:09, 08/7/2021}(i),
	$$\int_0^{1/\varepsilon} t^{\frac{1+\alpha}{p}-1+\varepsilon} \varphi(t) dt \lesssim \varepsilon^{-\varepsilon}\frac{\|{\mathscr H}_\varphi\left(\Phi_{\varepsilon}\right)\|_{{\mathcal A}_{\alpha}^p\left(\mathbb C_+\right)}}{\|\Phi_\varepsilon\|_{\Bergman}}\leq \varepsilon^{-\varepsilon} \|\Hau\|_{\Bergman\to\Bergman}.$$
	Letting $\varepsilon\to 0$, we obtain 
	$$\int_0^{\infty} t^{\frac{1+\alpha}{p}-1} \varphi(t) dt \lesssim \|\Hau\|_{\Bergman\to\Bergman}<\infty,$$
	which proves that (\ref{09:46, 23/11/2021}) holds.
	
	{\it Case 2:} $\frac{1+\alpha}{p}\in \{2n: n\in\bN\}$. Then, by Lemma \ref{14:44, 08/7/2021}(ii), there exist two constants $\varepsilon(p,\alpha)\in (0,\frac{\pi}{2})$ and $C(p,\alpha)>0$ such that 
	$$\begin{cases}
	\mbox{$\Re(\Phi_\varepsilon)$ has constant sign on $A_{(\frac{\pi}{2}-\varepsilon(p,\alpha), \frac{\pi}{2})}$ for all $0<\varepsilon<\varepsilon(p,\alpha)$},\\
	\mbox{$|\Re\left[\Phi_\varepsilon(z)\right]|\geq C(p,\alpha) \left|\Phi_\varepsilon(z)\right|$ for all $z\in A_{(\frac{\pi}{2}-\varepsilon(p,\alpha), \frac{\pi}{2})}$}.
	\end{cases}$$
	By a similar argument to Case 1 (replacing $\Im(\Phi_\varepsilon)$ by $\Re(\Phi_\varepsilon)$), we also get
	$$\int_0^{1/\varepsilon} t^{\frac{1+\alpha}{p}-1+\varepsilon} \varphi(t) dt \lesssim \varepsilon^{-\varepsilon}\frac{\|{\mathscr H}_\varphi\left(\Phi_{\varepsilon}\right)\|_{{\mathcal A}_{\alpha}^p\left(\mathbb C_+\right)}}{\|\Phi_\varepsilon\|_{\Bergman}}\leq \varepsilon^{-\varepsilon} \|\Hau\|_{\Bergman\to\Bergman}.$$
	Letting $\varepsilon\to 0$, we obtain 
	$$\int_0^{\infty} t^{\frac{1+\alpha}{p}-1} \varphi(t) dt \lesssim \|\Hau\|_{\Bergman\to\Bergman}<\infty$$
	which proves that (\ref{09:46, 23/11/2021}) holds, and thus ends the proof of Theorem \ref{the main theorem 1}.	
\end{proof}


\begin{proof}[Proof of Theorem \ref{16:22, 26/4/2025}]
	Let $f\in \mathcal D(\bC_+)$. Thanks to \cite[Proposition 1.3]{BBGNPR}, we have
	$$
	|f'(z)|\lesssim  y^{-1} \|f'\|_{\mathcal A^2_1(\bC_+)}= y^{-1} \|f\|_{\mathcal D(\bC_+)}
	$$
	for  all $z=x+iy\in\bC_+$. Therefore, the Lagrange mean value theorem yields
	$$\left|f\left(\frac{z}{t}\right)\right|\lesssim |f(z)| + \frac{|z|}{y}\left|\log\frac{1}{t}\right| \|f\|_{\mathcal D(\bC_+)}$$
	for all $z=x+iy\in\bC_+$ and all $t\in (0,\infty)$. Thus, 
	$$\begin{aligned}
		|\Hau f(z)|&\leq \int_0^\infty \left|f\left(\frac{z}{t}\right)\right| \left|\frac{\varphi(t)}{t}\right| dt\\
		&\lesssim |f(z)| \int_0^\infty \frac{|\varphi(t)|}{t} dt + \frac{|z|}{y} \|f\|_{\mathcal D(\bC_+)}\int_0^\infty \frac{|\log t|}{t} |\varphi(t)| dt<\infty.
	\end{aligned}$$
	This proves that $\Hau f$ is well-defined on $\bC_+$.
	
	On the other hand, it follows from Lemma \ref{10:57, 03/7/2021} and the Lebesgue dominated convergence theorem that $\mathscr H_{\widetilde{\varphi}}(f')(z)= \int_{0}^{\infty} f'(z/t)\frac{\varphi(t)}{t^2}dt$ is well-defined for all $z\in\bC_+$. Moreover, by Theorem \ref{07:43, 19/11/2021}(i),
	$$\begin{aligned}
		\|\mathscr H_{\varphi} f\|_{\mathcal D(\bC_+)} = \|(\mathscr H_{\varphi} f)'\|_{\mathcal A^2_1(\bC_+)}&= \|\mathscr H_{\widetilde{\varphi}}(f')\|_{\mathcal A^2_1(\bC_+)}\\
		&\leq \int_0^\infty |\widetilde{\varphi}(t)| dt \|f'\|_{\mathcal A^2_1(\bC_+)}=\int_0^\infty \frac{|\varphi(t)|}{t} dt \|f\|_{\mathcal D(\bC_+)}.
	\end{aligned}$$
	Thus $\Hau$ is bounded on $\mathcal D(\bC_+)$ and
	$$\|\Hau\|_{\mathcal D(\bC_+)\to \mathcal D(\bC_+)}\leq \int_0^\infty \frac{|\varphi(t)|}{t} dt.$$	
\end{proof}

\begin{remark}
	In Theorem \ref{16:22, 26/4/2025}, the condition  $\int_0^\infty \frac{|\log t|}{t} |\varphi(t)| dt<\infty$ can be replaced by the weaker condition $\int_0^\infty \frac{|\varphi(t)|}{t}  dt<\infty$ if one defines 
	$$\mathscr H_{\varphi} f:= \lim_{\delta\to 0} \mathscr H_{\varphi_\delta} f$$
	in $\mathcal D(\bC_+)$, where $\varphi_\delta$ is as in (\ref{14:48, 21/11/2021}).
\end{remark}

\section{Hausdorff operators $\Hau$ acting on $\Hardy$}

The main purpose of this section is to give the proof of Theorem \ref{the main theorem 2}. Given $1<r<\infty$, a locally integrable nonnegative function $w: \R \to [0,\infty)$ is said to belong to the {\it reverse H\"older class} $RH_r(\R)$ if
$$\left(\frac{1}{|I|}\int_I [w(x)]^r dx\right)^{1/r}\lesssim \frac{1}{|I|}\int_I w(x) dx$$
for all intervals $I\subset \R$. The following lemma is well-known (see the book of Garc\'ia-Cuerva and Rubio de Francia \cite[CHAPTER IV]{GR}). 

\begin{lemma}\label{15:39, 18/11/2021}
	\begin{enumerate}[\rm (i)]
		\item Let $w\in A_q(\R)$. Then, there exists $1<r(q,w)<\infty$, depending only $q$ and $w$, such that $w\in RH_{r(q,w)}(\R)$.
		
		\item Let $1\leq q<\infty$, $1<r<\infty$ and $w\in A_q(\R)\cap RH_r(\R)$. Then
		$$\left(\frac{|E|}{|I|}\right)^q \lesssim \frac{\int_E w(x)dx}{\int_I w(x)dx}\lesssim \left(\frac{|E|}{|I|}\right)^{(r-1)/r}$$
		for all intervals $I\subset \R$ and all measurable sets $E\subset I$.
	\end{enumerate}
\end{lemma}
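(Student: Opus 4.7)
\medskip
\noindent\emph{Proof proposal.} The plan is to treat part (i) as the classical self-improvement property of $A_q$ weights due to Coifman--Fefferman and Muckenhoupt, and to derive both estimates in part (ii) directly from H\"older's inequality combined with the $A_q$ and $RH_r$ definitions.

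For (i), I would follow a standard Calder\'on--Zygmund stopping-time argument. Fix an interval $I$ and set $\lambda_0 := \frac{1}{|I|}\int_I w(x)\, dx$. For each $\lambda > \lambda_0$, perform a dyadic Calder\'on--Zygmund decomposition of $w$ on $I$ at level $\lambda$, producing stopping intervals on which the averages of $w$ are comparable to $\lambda$ and outside which $w \leq \lambda$ almost everywhere. Applying the $A_q$ condition on each stopping interval yields a reverse-distributional estimate of the form $\int_{\{w > \lambda\}\cap I} w(x)\, dx \leq C\lambda\,|\{w>\lambda\}\cap I|$, and iterating the decomposition between dyadic levels $\lambda$ and $2\lambda$ produces a strict contraction of the superlevel mass. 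Feeding this contraction into the layer-cake identity upgrades $w$ to $L^{1+\varepsilon}(I)$, with $\int_I w^{1+\varepsilon} \lesssim |I|^{-\varepsilon}\bigl(\int_I w\bigr)^{1+\varepsilon}$ for some $\varepsilon = \varepsilon(q,w) > 0$; setting $r(q,w) := 1+\varepsilon$ this is exactly $w \in RH_{r(q,w)}(\R)$. This is the deepest step and the main obstacle: the delicate point is to verify that the contraction constant is strictly less than one, uniformly over intervals $I$ and levels $\lambda$.

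For the right-hand inequality in (ii), I would apply H\"older with exponents $r$ and $r' = r/(r-1)$ to $\int_E w\, dx = \int_I w(x)\chi_E(x)\, dx$, which yields
\[
\int_E w(x)\, dx \leq \left(\int_I w(x)^r\, dx\right)^{1/r} |E|^{(r-1)/r},
\]
and then use $w \in RH_r(\R)$ to bound $\bigl(\int_I w^r\bigr)^{1/r} \lesssim |I|^{-1/r'}\int_I w\, dx$. Dividing through by $\int_I w\, dx$ produces the desired $(|E|/|I|)^{(r-1)/r}$ upper bound.

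For the left-hand inequality in (ii), I split into two cases. When $q > 1$, H\"older with exponents $q$ and $q/(q-1)$ applied to $|E| = \int_E w(x)^{1/q} w(x)^{-1/q}\, dx$ gives
\[
|E|^q \leq \left(\int_E w(x)\, dx\right)\left(\int_I w(x)^{-1/(q-1)}\, dx\right)^{q-1},
\]
and the $A_q$ condition bounds the last factor by $C\,|I|^q /\int_I w(x)\, dx$, which rearranges to $(|E|/|I|)^q \lesssim \int_E w /\int_I w$. When $q = 1$, the $A_1$ condition $\frac{1}{|I|}\int_I w(x)\, dx \leq C\,\operatorname{ess-inf}_{x\in I} w(x)$ yields $\int_E w(x)\, dx \geq |E|\,\operatorname{ess-inf}_{x\in I} w(x) \gtrsim (|E|/|I|)\int_I w(x)\, dx$, which is the required estimate with exponent one.
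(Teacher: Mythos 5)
Your proposal is correct, but note that the paper does not prove this lemma at all: it is stated as ``well-known'' with a citation to Chapter IV of Garc\'ia-Cuerva and Rubio de Francia, so there is no in-paper argument to compare against. What you have written is essentially the classical proof from that reference. Your treatment of part (ii) is complete and accurate: the right-hand bound is exactly H\"older with exponents $r,r'$ followed by the reverse H\"older condition, and the left-hand bound is H\"older with exponents $q,q'$ applied to $|E|=\int_E w^{1/q}w^{-1/q}\,dx$ followed by the $A_q$ condition (with the correct separate $\operatorname{ess\text{-}inf}$ argument for $q=1$). Part (i) is the Coifman--Fefferman/Muckenhoupt self-improvement theorem; your Calder\'on--Zygmund stopping-time outline is the standard route, and you correctly identify the genuinely delicate point (the uniform strict contraction of superlevel mass between dyadic levels, which ultimately rests on the doubling and the $A_\infty$-type small-sets property of $A_q$ weights), though as written it remains a sketch rather than a full verification. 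Since the paper itself delegates the entire lemma to the literature, supplying the two-line H\"older arguments for (ii) and citing (i) would be the proportionate choice; if you do want (i) self-contained, the contraction step needs the quantitative estimate $\int_E w \leq \theta \int_I w$ for $|E|\leq \beta|I|$ with $\theta<1$, which is itself a consequence of the left-hand inequality of (ii) applied to $I\setminus E$.
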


Denote by 
$$w_\alpha(x):=|x|^\alpha\quad\mbox{and}\quad B(x,t):= \{y\in\R: |y-x|<t\}$$
for all $x\in\R$ and $t>0$. In what follows, for any bounded measurable set $E\subset \R$,
$$w_\alpha(E):=\int_E w_\alpha(x) dx= \int_E |x|^\alpha dx.$$

\begin{lemma}\label{15:55, 18/11/2021}
	Let $x_0+i y_0\in \bC_+$ and $x+i y\in D= \{z\in\bC: |z|<1\}$. Then
	$$w_\alpha\left(B\left(x_0+\frac{y_0}{2}x, y_0+ \frac{y_0}{2}y\right)\right)\sim w_\alpha\left(B(x_0,y_0)\right),$$
	where the constants $C>0$ are independent of $x_0, y_0, x, y$. 
\end{lemma}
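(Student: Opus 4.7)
The plan is to exploit the doubling property of the power weight $w_\alpha(x) = |x|^\alpha$ together with the geometric observation that the two intervals $B_0 := B(x_0, y_0)$ and $B_1 := B(x_0 + \frac{y_0}{2}x, y_0 + \frac{y_0}{2}y)$ are nested within each other after dilation by a universal factor.

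First I would note that $x+iy \in D$ forces $|x| < 1$ and $|y| < 1$, so the center of $B_1$ is within distance $y_0/2$ of $x_0$, while its radius $y_0(1+y/2)$ lies in $(y_0/2, 3y_0/2)$. A quick triangle-inequality check then yields
$$B_1 \subseteq B(x_0, 2y_0) = 2B_0 \quad \text{and} \quad B_0 \subseteq B\bigl(x_0 + \tfrac{y_0}{2}x,\, 3y_0/2\bigr) \subseteq 3B_1,$$
the last inclusion using $3(y_0 + y_0 y/2) \geq 3y_0/2$, which holds because $y > -1$.

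Next I would invoke the standard fact that $w_\alpha \in A_\infty(\R)$ whenever $\alpha > -1$ (indeed $w_\alpha \in A_q(\R)$ for any $q > \alpha+1$ large enough); in particular, by Lemma \ref{15:39, 18/11/2021}(ii) applied with $E = I$ and $E = 2I$, the measure $w_\alpha$ is doubling, so $w_\alpha(2I) \leq C_\alpha \, w_\alpha(I)$ for every bounded interval $I \subset \R$, with a constant depending only on $\alpha$. Combining the doubling bound with the two inclusions above gives
$$w_\alpha(B_1) \leq w_\alpha(2B_0) \lesssim w_\alpha(B_0), \qquad w_\alpha(B_0) \leq w_\alpha(3B_1) \lesssim w_\alpha(B_1),$$
which is precisely the comparison $w_\alpha(B_1) \sim w_\alpha(B_0)$ with constants depending only on $\alpha$.

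There is no real obstacle to this argument: the two ingredients—doubling of $|x|^\alpha$ and the elementary inclusions—are both routine. The only subtle point is the uniformity of the implicit constants in $x_0, y_0, x, y$, which is automatic because the doubling constant of $w_\alpha$ depends only on $\alpha$, and the dilation factors $2$ and $3$ appearing in the inclusions are absolute.
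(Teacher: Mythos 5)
Your proof is correct and follows essentially the same route as the paper's: both establish sandwich inclusions between $B(x_0+\frac{y_0}{2}x, y_0+\frac{y_0}{2}y)$ and dilates of $B(x_0,y_0)$ (the paper uses the chain $B_1\subset B(x_0,2y_0)\subset 5B_1$, you use $B_1\subset 2B_0$ and $B_0\subset 3B_1$), and then conclude via the $A_q$ membership of $|x|^\alpha$ together with the comparison estimates of Lemma \ref{15:39, 18/11/2021}. The difference in the dilation constants and in invoking doubling explicitly rather than the two-sided estimate is immaterial.
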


\begin{proof}
	From $y_0>0$, it is easy to verify that
	$$B\left(x_0+\frac{y_0}{2}x, y_0+ \frac{y_0}{2}y\right)\subset B(x_0, 2y_0)\subset B\left(x_0+\frac{y_0}{2}x, 5\left(y_0+ \frac{y_0}{2}y\right)\right).$$
	This, together with Remark \ref{10:11, 18/11/2021} and Lemma \ref{15:39, 18/11/2021}, implies that
	$$w_\alpha\left(B\left(x_0+\frac{y_0}{2}x, y_0+ \frac{y_0}{2}y\right)\right)\sim w_\alpha\left(B(x_0,y_0)\right).$$
\end{proof}

\begin{lemma}\label{15:10, 15/7/2021}
	Let $1\leq p\leq\infty$ and $\alpha>-1$. Then
	\begin{equation}\label{Ga, Lemma II.1.2}
		|f(z)|=|f(x+iy)|\lesssim [w_\alpha(B(x,y))]^{-1/p} \|f\|_{\Hardy}
	\end{equation}
	and
	\begin{equation}\label{15:36, 14/7/2021}
		|f'(z)|=|f'(x+iy)|\lesssim \frac{[w_\alpha(B(x,y))]^{-1/p}}{y} \|f\|_{\Hardy}
	\end{equation}
	for all $f\in\Hardy$ and all $z=x+iy\in\bC_+$.
\end{lemma}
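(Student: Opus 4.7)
The two inequalities are handled separately. The first, (\ref{Ga, Lemma II.1.2}), is the standard pointwise growth bound for $\Hardy$ and is precisely the content of \cite[Lemma~II.1.2]{Ga}, so I would simply invoke that reference. Its proof relies on the subharmonicity of $|f|^p$ on $\bC_+$, a sub-mean-value estimate on the disk $D(z_0, y_0/2) \subset \bC_+$, and the Muckenhoupt/reverse H\"older properties of $w_\alpha$ recorded in Lemma~\ref{15:39, 18/11/2021} to pass from an unweighted area average to a weighted boundary average.

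For the derivative estimate (\ref{15:36, 14/7/2021}), I would mimic the Schwarz-lemma argument already used in the Bergman setting of Lemma~\ref{10:57, 03/7/2021}(i). Fix $z_0 = x_0 + iy_0 \in \bC_+$ and let $C_0 = C_0(p,\alpha) > 0$ be a large constant to be chosen. Define
$$g(w) := \frac{f\bigl(z_0 + \tfrac{y_0}{2} w\bigr) - f(z_0)}{C_0\,[w_\alpha(B(x_0,y_0))]^{-1/p}\,\|f\|_{\Hardy}}, \qquad w \in \mathbb D.$$
Then $g$ is holomorphic on $\mathbb D$ with $g(0) = 0$, since $z_0 + \tfrac{y_0}{2} w \in \bC_+$ whenever $|w|<1$. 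Applying the already-proven first inequality (\ref{Ga, Lemma II.1.2}) at the point $z_0 + \tfrac{y_0}{2}w$ gives, for $w=x+iy\in \mathbb D$,
$$\left| f\bigl(z_0 + \tfrac{y_0}{2} w\bigr)\right| \lesssim \bigl[w_\alpha\bigl(B(x_0 + \tfrac{y_0}{2}x,\, y_0 + \tfrac{y_0}{2}y)\bigr)\bigr]^{-1/p}\,\|f\|_{\Hardy},$$
and Lemma~\ref{15:55, 18/11/2021} shows that the weighted measure on the right is comparable to $w_\alpha(B(x_0, y_0))$ with constants independent of $w$. Combining this with the same bound at $w=0$ and choosing $C_0$ sufficiently large (depending only on $p$ and $\alpha$) forces $|g(w)| \leq 1$ throughout $\mathbb D$.

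Lemma~\ref{the Schwarz lemma} then yields $|g'(0)| \leq 1$. Since by the chain rule $g'(0) = (y_0/2)\,f'(z_0) / \bigl(C_0\,[w_\alpha(B(x_0,y_0))]^{-1/p}\,\|f\|_{\Hardy}\bigr)$, rearrangement produces precisely (\ref{15:36, 14/7/2021}). The main obstacle is obtaining the uniform bound $|g| \leq 1$ on all of $\mathbb D$; this is exactly what the weighted-measure comparability provided by Lemma~\ref{15:55, 18/11/2021} is designed to achieve, and without it the normalizing denominator in $g$ would itself depend on $w$, preventing a direct application of the Schwarz lemma.
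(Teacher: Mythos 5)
Your proposal is correct and follows essentially the same route as the paper: the first bound is quoted from \cite[Lemma II.1.2]{Ga}, and the derivative bound is obtained by the same Schwarz-lemma argument on $w\mapsto f(z_0+\tfrac{y_0}{2}w)$, normalized via the comparability of $w_\alpha(B(x_0+\tfrac{y_0}{2}x,\,y_0+\tfrac{y_0}{2}y))$ with $w_\alpha(B(x_0,y_0))$ from Lemma \ref{15:55, 18/11/2021}. The only (trivial) point the paper makes explicit that you omit is the case $p=\infty$, where both inequalities are immediate from the definition of the norm.
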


\begin{proof}
	It is obvious if $p=\infty$ or $f=0$. Otherwise, thanks to \cite[Lemma II.1.2]{Ga}, we have
	\begin{equation}\label{10:49, 15/7/2021}
		|f(z)|=|f(x+iy)|\leq C(p,\alpha) [w_\alpha(B(x,y))]^{-1/p} \|f\|_{\Hardy},
	\end{equation}
	where the constant $C(p,\alpha)>0$ depends only on $p$ and $\alpha$. This proves (\ref{Ga, Lemma II.1.2}).
	
	Now let us prove (\ref{15:36, 14/7/2021}). Indeed, for every $z_0=x_0+i y_0\in \bC_+$ and $z=x+i y\in \mathbb D$, by Lemma \ref{15:55, 18/11/2021}, there exist two constants $0<C_1(\alpha) < C_2(\alpha)$ such that
	$$C_1(\alpha) w_\alpha\left(B(x_0,y_0)\right)\leq w_\alpha\left(B\left(x_0+\frac{y_0}{2}x, y_0+ \frac{y_0}{2}y\right)\right)\leq C_2(\alpha) w_\alpha\left(B(x_0,y_0)\right).$$
	Therefore, by (\ref{10:49, 15/7/2021}), we get
	$$\left|f\left(z_0+\frac{y_0}{2}z\right)\right|\leq C(p,\alpha) [C_1(\alpha)]^{-1/p} [w_\alpha\left(B(x_0,y_0)\right)]^{-1/p} \|f\|_{\Hardy}.$$
	Thus, if we define the holomorphic function $g: \mathbb D\to \bC$ as follows
	$$g(z)=\frac{f(z_0+ \frac{y_0}{2}z)-f(z_0)}{(1+[C_1(\alpha)]^{-1/p})C(p,\alpha) [w_\alpha\left(B(x_0,y_0)\right)]^{-1/p} \|f\|_{\Hardy}},$$
	then, by using Lemma \ref{the Schwarz lemma}, we obtain
	$$\begin{aligned}
	|f'(z_0)|&=\frac{2}{y_0}(1+[C_1(\alpha)]^{-1/p})C(p,\alpha) [w_\alpha\left(B(x_0,y_0)\right)]^{-1/p} \|f\|_{\Hardy} |g'(0)|\\
	&\leq 2(1+[C_1(\alpha)]^{-1/p})C(p,\alpha) \frac{[w_\alpha\left(B(x_0,y_0)\right)]^{-1/p}}{y_0} \|f\|_{\Hardy}.
	\end{aligned}$$
	This proves (\ref{15:36, 14/7/2021}).
\end{proof}

\begin{lemma}\label{14:38, 14/7/2021}
	Let $1\leq p\leq \infty$, $\alpha>-1$, and let $\varphi$ be such that (\ref{main inequality}) holds. Then:
	\begin{enumerate}[\rm (i)]
		\item $\Hau f$ is well-defined and  holomorphic on $\mathbb C_+$ for all $f\in \Hardy$. Moreover, 
		$$(\Hau f)'= \mathscr H_{\widetilde{\varphi}} (f'),$$
		where $\widetilde{\varphi}$ is defined as in (\ref{07:18, 19/11/2021}).
		
		\item $\Hau$ is bounded on $\Hardy$, moreover,
		$$\left|\int_0^\infty t^{\frac{1+\alpha}{p}-1}\varphi(t)dt\right|\leq \|\Hau\|_{\Hardy\to\Hardy}\leq \int_0^\infty t^{\frac{1+\alpha}{p}-1}|\varphi(t)| dt.$$
	\end{enumerate}
\end{lemma}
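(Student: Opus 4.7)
The plan is to mimic Lemma \ref{10:57, 03/7/2021}, replacing the Bergman pointwise estimate of \cite[Proposition~1.3]{BBGNPR} by the Hardy-space pointwise bounds from Lemma \ref{15:10, 15/7/2021}, and replacing integrals against $dA_\alpha$ by suprema over $y>0$ of $|x|^\alpha$-weighted $L^p$ integrals on horizontal lines. The case $p=\infty$ will be handled separately at the end.

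For part (i), the key observation is a scaling identity for the weight: the substitution $u=ts$ shows that $w_\alpha(B(x/t,y/t))=t^{-(1+\alpha)}w_\alpha(B(x,y))$ for every $t>0$. Combined with (\ref{Ga, Lemma II.1.2}), this yields
$$|f(z/t)|\lesssim t^{(1+\alpha)/p}[w_\alpha(B(x,y))]^{-1/p}\|f\|_{\Hardy},$$
so the integrand defining $\Hau f(z)$ is majorized by $t^{(1+\alpha)/p-1}[w_\alpha(B(x,y))]^{-1/p}\|f\|_{\Hardy}|\varphi(t)|$, which is integrable in $t$ by (\ref{main inequality}). Applying the same scaling to (\ref{15:36, 14/7/2021}) gives $|f'(z/t)|\lesssim (t/y)\cdot t^{(1+\alpha)/p}[w_\alpha(B(x,y))]^{-1/p}\|f\|_{\Hardy}$, which furnishes an integrable majorant for the kernel $\varphi(t)/t^2$ appearing in $\mathscr H_{\widetilde\varphi}(f')$. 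Dominated convergence will then allow differentiation under the integral sign, yielding both the holomorphicity of $\Hau f$ and the identity $(\Hau f)'=\mathscr H_{\widetilde\varphi}(f')$.

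For the upper bound in part (ii), we apply Minkowski's inequality to move the $t$-integral outside the $L^p(|x|^\alpha dx)$ norm on each horizontal line, then change variables $u=x/t$ in the inner integral. Since $|x|^\alpha dx=t^{1+\alpha}|u|^\alpha du$, the resulting integral is $t^{1+\alpha}\int_\R|f(u+iy/t)|^p|u|^\alpha du$, whose $p$-th root is bounded by $t^{(1+\alpha)/p}\|f\|_{\Hardy}$ uniformly in $y/t$. Combined with the factor $|\varphi(t)|/t$, this yields exactly $\|f\|_{\Hardy}\int_0^\infty t^{(1+\alpha)/p-1}|\varphi(t)|\,dt$. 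When $p=\infty$, the estimate is immediate from $\|f(\cdot/t)\|_\infty=\|f\|_\infty$, consistent with $(1+\alpha)/p=0$.

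For the lower bound, we follow the test-function scheme of Lemma \ref{10:57, 03/7/2021} step by step, taking $f_\varepsilon(z)=1/(z+i)^{(1+\alpha)/p+\varepsilon}$ together with the truncation $\varphi_\delta=\varphi\chi_{(\delta,1/\delta)}$. The norm estimate $\|f_\varepsilon\|_{\Hardy}\sim\varepsilon^{-1/p}$ is supplied by Remark \ref{10:09, 08/7/2021}(i), and the Lagrange-type pointwise bound on $\phi_{\varepsilon,z}(t)-\phi_{\varepsilon,z}(1)$ is identical to the Bergman case. The only computation that needs to be redone is the Hardy norm of the auxiliary function $1/(z+i)^{1+(1+\alpha)/p}$: after the substitution $x=(y+1)u$ this reduces to a finite Beta-function integral (using $\alpha>-1$) whose value is independent of $\varepsilon$, so the corresponding error term divided by $\|f_\varepsilon\|_{\Hardy}$ is of order $\varepsilon^{1/p}\to 0$. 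Together with $\|\Hau-\mathscr H_{\varphi_\delta}\|_{\Hardy\to\Hardy}\to 0$ (the upper bound applied to $\varphi-\varphi_\delta$) and the limit $\delta\to 0$, this will produce the sharp lower bound. For $p=\infty$ the natural test function is the constant $f\equiv 1$, giving $\Hau 1\equiv\int_0^\infty\varphi(t)/t\,dt$ and hence $\|\Hau\|_{\Hardy\to\Hardy}\geq|\int_0^\infty\varphi(t)/t\,dt|$. The main obstacle we anticipate is verifying that the Hardy-space analogues of the auxiliary norm estimates used in the Bergman argument remain uniformly bounded in $\varepsilon$; as sketched, this reduces to elementary Beta-function computations.
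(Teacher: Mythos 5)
Your proposal is correct and follows essentially the same route as the paper: the pointwise bounds of Lemma \ref{15:10, 15/7/2021} together with the scaling $w_\alpha(B(x/t,y/t))=t^{-(1+\alpha)}w_\alpha(B(x,y))$ and dominated convergence for part (i), and Minkowski's inequality plus the test functions $f_\varepsilon(z)=(z+i)^{-(1+\alpha)/p-\varepsilon}$ with the truncation $\varphi_\delta$ for part (ii). Your explicit treatment of $p=\infty$ (via $\|f(\cdot/t)\|_\infty=\|f\|_\infty$ and the constant test function) is a minor point the paper leaves implicit, but it is not a different method.
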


\begin{proof}
	(i) By Lemma \ref{15:10, 15/7/2021}, we see that
	$$\begin{aligned}
	|\Hau f(z)| &\leq \int_0^\infty \left|f\left(\frac{z}{t}\right)\right| \frac{|\varphi(t)|}{t} dt\\
	&\lesssim  \int_0^\infty \left[w_\alpha\left(B\left(\frac{x}{t},\frac{y}{t}\right)\right)\right]^{-1/p} \|f\|_{\Hardy} \frac{|\varphi(t)|}{t}dt\\
	&=[w_\alpha(B(x,y))]^{-1/p} \|f\|_{\Hardy}\int_0^\infty  t^{\frac{1+\alpha}{p}-1} |\varphi(t)| dt<\infty,
	\end{aligned}$$
	which proves that $\Hau f$ is well-defined on $\mathbb C_+$. We also see that
	$$\begin{aligned}
	\left|\mathscr H_{\widetilde{\varphi}}(f')(z)\right| &\leq \int_0^\infty \left|f'\left(\frac{z}{t}\right)\right| \frac{|\varphi(t)|}{t^2} dt\\
	&\lesssim  \int_0^\infty \frac{\left[w_\alpha\left(B\left(\frac{x}{t},\frac{y}{t}\right)\right)\right]^{-1/p}}{\frac{y}{t}} \|f\|_{\Hardy} \frac{|\varphi(t)|}{t^2}dt\\
	&=\frac{[w_\alpha(B(x,y))]^{-1/p}}{y} \|f\|_{\Hardy}\int_0^\infty  t^{\frac{1+\alpha}{p}-1} |\varphi(t)| dt<\infty
	\end{aligned}$$	
	for all $z=x+iy\in\bC_+$ and, using the Lebesgue dominated convergence theorem, we obtain 
	$$(\Hau f)'(z)=\mathscr H_{\widetilde{\varphi}} (f')(z)= \int_0^\infty f'(z/t)\frac{\varphi(t)}{t^2} dt.$$
	
	(ii) By the Minkowski inequality, for every $f\in\Hardy$, 
	$$\begin{aligned}
	\|\Hau f\|_{\Hardy}&=\sup_{y>0}\left(\int_{-\infty}^\infty \left|\int_0^\infty f\left(\frac{x}{t}+i\frac{y}{t}\right)\frac{\varphi(t)}{t}dt\right|^p |x|^\alpha dx\right)^{1/p}\\
	&\leq \|f\|_{\Hardy} \int_0^\infty t^{\frac{1+\alpha}{p}-1}|\varphi(t)|dt.
	\end{aligned}$$
	This implies that $\Hau$ is bounded on $\Hardy$, moreover,
	\begin{equation}\label{15:45, 15/7/2021}
	\|\Hau\|_{\Hardy\to\Hardy}\leq \int_0^\infty t^{\frac{1+\alpha}p-1}|\varphi(t)| dt.
	\end{equation}

	Let $0<\delta<1$ and let $\varphi_\delta$ be as in (\ref{14:48, 21/11/2021}). For any $0<\varepsilon<1$, denote by
	$$f_\varepsilon(z):= \varepsilon^{\frac{1+\alpha}{p} +\varepsilon} \Phi_\varepsilon(\varepsilon z)=\frac{1}{(z+i)^{\frac{1+\alpha}{p} +\varepsilon}},\quad \forall\, z\in\bC_+.$$
	Then, by Remark \ref{10:09, 08/7/2021}(i), 
	 \begin{equation*}\label{17:27, 15/7/2021}
	 \|f_\varepsilon\|_{\Hardy} \sim \varepsilon^{-1/p}.
	 \end{equation*}
	 A similar argument to the proof of Lemma \ref{10:57, 03/7/2021}(ii) yields
	 \begin{equation}\label{17:34, 22/11/2021}
	 	\begin{aligned}
	 		&\frac{\left\|\mathscr H_{\varphi_\delta}(f_\varepsilon) - f_\varepsilon \int_0^\infty t^{\frac{1+\alpha}{p}-1}\varphi_\delta(t)dt \right\|_{\Hardy}}{\|f_\varepsilon\|_{\Hardy}}\\
	 		&\lesssim \int_{\delta}^{1/\delta} t^{\frac{1+\alpha}{p}-1}\varphi(t)dt\left[\varepsilon \delta^{-(2+\frac{1+\alpha}{p})} + \varepsilon^{1/p} \delta^{-(4+\frac{1+\alpha}{p})}\right] \to 0
	 	\end{aligned}
	 \end{equation}
	as $\varepsilon\to 0$. As a consequence, we obtain
	$$\left|\int_{\delta}^{1/\delta} t^{\frac{1+\alpha}{p}-1}\varphi(t)dt \right|= \left|\int_{0}^{\infty} t^{\frac{1+\alpha}{p}-1}\varphi_\delta(t)dt\right|\leq \|\mathscr H_{\varphi_\delta}\|_{\Hardy\to\Hardy},$$
	and thus, 
	$$\begin{aligned}
		&\;\quad\|\Hau\|_{\Hardy\to\Hardy}\\
		&\geq \|\mathscr H_{\varphi_\delta}\|_{\Hardy\to\Hardy}- \|\mathscr H_{\varphi}-\mathscr H_{\varphi_\delta}\|_{\Hardy\to\Hardy}\\
		&\geq \left|\int_0^\infty t^{\frac{1+\alpha}{p}-1}\varphi(t)dt\right| - 2 \left(\int_0^\delta t^{\frac{1+\alpha}{p}-1}|\varphi(t)|dt + \int_{1/\delta}^\infty t^{\frac{1+\alpha}{p}-1}|\varphi(t)|dt \right)\to \left|\int_0^\infty t^{\frac{1+\alpha}{p}-1}\varphi(t)dt\right|
	\end{aligned}$$
	 as $\delta \to 0$. This, together with (\ref{15:45, 15/7/2021}), allows us to conclude that
	$$\left|\int_0^\infty t^{\frac{1+\alpha}{p}-1}\varphi(t)dt\right|\leq \|\Hau\|_{\Hardy\to\Hardy}\leq \int_0^\infty t^{\frac{1+\alpha}{p}-1}|\varphi(t)|dt.$$
\end{proof}

Now we are ready to give the proof of Theorem \ref{the main theorem 2}.

\begin{proof}[Proof of Theorem \ref{the main theorem 2}]
	By Lemma \ref{14:38, 14/7/2021}, it suffices to show that if $\varphi\geq 0$ and $\Hau$ is bounded on $\Hardy$, then 
	\begin{equation}\label{11:30, 23/11/2021}
		\int_{0}^{\infty} t^{\frac{1+\alpha}{p}-1} \varphi(t) dt<\infty.
	\end{equation}	
	
	{\it Case 1}: $p=\infty$. From $f=1\in \mathcal H^\infty_{|\cdot|^\alpha}(\bC_+)$ and $\Hau$ is bounded on $\mathcal H^\infty_{|\cdot|^\alpha}(\bC_+)$, we obtain that $\int_{0}^{\infty} t^{-1}\varphi(t)dt =\Hau f$ is finite.
	
	{\it Case 2}: $1\leq p<\infty$.	A similar argument to the proof of Theorem \ref{the main theorem 1}(ii), we also get
	$$\int_0^{1/\varepsilon} t^{\frac{1+\alpha}{p}-1+\varepsilon} \varphi(t) dt \lesssim \varepsilon^{-\varepsilon}\frac{\|{\mathscr H}_\varphi\left(\Phi_{\varepsilon}\right)\|_{\Hardy}}{\|\Phi_\varepsilon\|_{\Hardy}}\leq \varepsilon^{-\varepsilon} \|\Hau\|_{\Hardy\to\Hardy}.$$
	Letting $\varepsilon\to 0$, we obtain 
	$$\int_0^{\infty} t^{\frac{1+\alpha}{p}-1} \varphi(t) dt \lesssim \|\Hau\|_{\Hardy\to\Hardy}<\infty.$$
	This proves that (\ref{11:30, 23/11/2021}) holds, and thus ends the proof of Theorem \ref{the main theorem 2}.
\end{proof}

\section{Some applications to the real version of $\mathscr H_\varphi$}

In this section, we give some applications of Theorem  \ref{the main theorem 2} in studying the Hausdorff operators $\mathcal H_\varphi$ on the real function spaces. Our primary goal is to provide the proofs of Theorems \ref{16:37, 26/4/2025} and \ref{16:42, 26/4/2025}. Let us begin by presenting some interesting examples of Hausdorff operators (see \cite{AS, BBMM, LMPS, MO}).

\begin{example}
	Let $\nu\in\bC$ be such that  $\Re(\nu)>0$. Following \cite{AS, BBMM}, we define the Ces\`aro-like operator as
	$$\mathcal C_\nu f(z)=\frac{1}{z^\nu}\int_0^z f(\zeta) \zeta^{\nu-1} d\zeta,\quad z\in \bC_+.$$
	This operator is a special case of the Hausdorff operator $\Hau$ associated with the kernel $\varphi(t)=\frac{1}{t^{\nu}} \chi_{(1,\infty)}(t)$ for all $t\in (0,\infty)$.
\end{example}

\begin{example}
	Let $\beta>0$. Following \cite{LMPS}, we define the generalized Ces\`aro operator as 
	$$\mathscr C_\beta f(x)=\beta \int_0^1 f(tx) (1-t)^{\beta-1} dt,\quad x\in\R.$$
	This operator is a special case of the real Hausdorff operator $\H_\varphi$ associated with the kernel $\varphi(t)=\frac{\beta (t-1)^{\beta-1}}{t^{\beta}} \chi_{(1,\infty)}(t)$ for all $t\in (0,\infty)$.
\end{example}

\begin{example}
	Let $a: (0,\infty)\to (0,\infty)$ be a locally integrable function. Following \cite{CO}, we define the operator $S_a$ as
	$$S_a f(x)= \int_{0}^{\infty} f(tx) a(t) dt,\quad x\in\R.$$
	This operator is a special case of the real Hausdorff operator $\H_\varphi$ associated with the kernel $\varphi(t)=\frac{a(\frac{1}{t})}{t}$ for all $t\in (0,\infty)$. When $a(t)= \frac{t^{\mu-\beta}}{(t+1)^{\mu}}$, the operator $S_a$ becomes the generalized Stieltjes operator $\mathcal S_{\beta, \mu}$ as studied in \cite{MO}.
\end{example}

We first recall a lemma due to Garc\'ia-Cuerva \cite[THEOREM II.1.1]{Ga}.

\begin{lemma}\label{10:18, 21/11/2021}
	Let $0<p<\infty$, $\alpha>-1$. Then, 
		$$\|f^*\|_{L^p_{|\cdot|^\alpha}(\R)}\sim \|f\|_{\Hardy}\quad\mbox{and}\quad \lim_{y\to 0}\|f(\cdot+iy)- f^*\|_{L^p_{|\cdot|^\alpha}(\R)}=0$$
		for all $f\in \Hardy$.
\end{lemma}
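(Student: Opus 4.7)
The plan is to follow the classical boundary-behavior theory for Hardy spaces of the upper half-plane, adapted to the power weight $w_\alpha(x)=|x|^\alpha$, which belongs to $A_\infty(\R)$ since $\alpha>-1$ (Remark \ref{10:11, 18/11/2021}). Three ingredients drive the argument: the pointwise estimate $|f(x+iy)|\lesssim [w_\alpha(B(x,y))]^{-1/p}\|f\|_{\Hardy}$ from Lemma \ref{15:10, 15/7/2021}, the subharmonicity of $|f|^p$ on $\bC_+$, and the weighted maximal-function theory encoded in Lemma \ref{15:39, 18/11/2021}.

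The first step is to produce the nontangential limit $f^*(x)=\lim_{y\to 0}f(x+iy)$ almost everywhere. By subharmonicity, for any fixed $\eta>0$ the function $|f(\cdot+i(y+\eta))|^p$ is dominated by the Poisson integral of $|f(\cdot+i\eta)|^p$; combined with the pointwise bound, one controls the nontangential maximal function $Nf(x):=\sup_{(\xi,y)\in\Gamma(x)}|f(\xi+iy)|$ pointwise by a weighted Hardy--Littlewood maximal operator applied to $f(\cdot+i\eta)$. Since $w_\alpha\in A_q$ for $q>1+\alpha$, this operator is bounded on $L^p_{|\cdot|^\alpha}(\R)$ (after possibly raising to a power and using the reverse Hölder property), giving $\|Nf\|_{L^p_{|\cdot|^\alpha}(\R)}\lesssim \|f\|_{\Hardy}$ and, by a Fatou-type argument, existence of $f^*$ a.e. The norm equivalence then follows: Fatou's lemma applied slice-wise yields $\|f^*\|_{L^p_{|\cdot|^\alpha}(\R)}\leq \|f\|_{\Hardy}$, and $|f(x+iy)|\leq Nf(x)$ combined with the weighted maximal bound delivers the reverse inequality. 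Norm convergence is then immediate from pointwise a.e. convergence plus the uniform majorant $Nf\in L^p_{|\cdot|^\alpha}(\R)$ via the dominated convergence theorem.

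The chief difficulty is the regime $0<p<1$, in which $f$ cannot be identified directly with the Poisson integral of $f^*$, so the Poisson-majorant step needs extra care. The standard remedy is to factor out a Blaschke product $B$ absorbing the zeros of $f$ in $\bC_+$ --- the Blaschke condition on those zeros being forced by finiteness of $\|f\|_{\Hardy}$ together with the pointwise bound --- write $f=B\cdot g$ with $g$ zero-free, take a root $g^{1/N}$ with $Np>1$, and reduce to the $p\geq 1$ case already sketched. Keeping the $A_q$ and reverse-Hölder constants for $w_\alpha$ (Lemma \ref{15:39, 18/11/2021}) under control through these manipulations, as well as verifying the boundary-value identity $(B\cdot g)^{*}=B^{*}\cdot g^{*}$ on the weighted $L^p$-side, is the technical crux one would need to execute carefully.
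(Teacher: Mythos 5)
The paper does not prove this lemma at all: it is quoted verbatim from Garc\'ia-Cuerva \cite[THEOREM II.1.1]{Ga} (with a pointer to \cite[Theorem 5.3]{Garn} for the unweighted case), so there is no in-paper argument to compare yours against. Judged on its own terms, your sketch follows the standard route for boundary behaviour in weighted Hardy spaces and the main ingredients (subharmonicity of $|f|^s$, Poisson majorization, nontangential maximal function controlled by a weighted maximal operator, local Fatou theorem, dominated convergence with majorant $Nf$) are the right ones.

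Two points in the execution deserve attention. First, the reverse norm inequality as you state it is circular: $|f(x+iy)|\leq Nf(x)$ together with $\|Nf\|_{L^p_{|\cdot|^\alpha}(\R)}\lesssim \|f(\cdot+i\eta)\|_{L^p_{|\cdot|^\alpha}(\R)}$ only returns $\|f\|_{\Hardy}\lesssim\|f\|_{\Hardy}$. To get $\|f\|_{\Hardy}\lesssim\|f^*\|_{L^p_{|\cdot|^\alpha}(\R)}$ you must pass to the limit $\eta\to 0$ in the majorization, i.e.\ establish $|f(x+iy)|^s\leq P_y*|f^*|^s(x)$ and then use the uniform $L^{p/s}_{|\cdot|^\alpha}$-boundedness of the Poisson averages for $|x|^\alpha\in A_{p/s}$. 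Second, the Blaschke-product detour for $0<p<1$ is both unnecessary and risky here: choosing the subharmonicity exponent $s>0$ small enough that $p/s>1+\alpha$ (so that $|x|^\alpha\in A_{p/s}$) handles all $0<p<\infty$ and $\alpha>-1$ in one stroke, whereas the factorization $f=B\cdot g$ and the identity $(Bg)^*=B^*g^*$ would require redeveloping the Blaschke/Smirnov theory in the weighted half-plane setting, which is exactly the content of Garc\'ia-Cuerva's chapter and not something you can treat as free. With the first gap repaired and the second detour removed, the sketch is a faithful reconstruction of the cited classical proof.
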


In order to prove Theorems \ref{16:37, 26/4/2025} and \ref{16:42, 26/4/2025}, we also need the following two lemmas.

\begin{lemma}\label{17:12, 22/11/2021}
	Let $1\leq p\leq \infty$, $\alpha>-1$, and let $\varphi$ be a measurable function on $(0,\infty)$ such that (\ref{main inequality}) holds. Then, $\H_\varphi$ is bounded on $L^p_{|\cdot|^\alpha}(\R)$, moreover,
	$$\|\H_\varphi\|_{L^p_{|\cdot|^\alpha}(\R)\to L^p_{|\cdot|^\alpha}(\R)}\leq \int_{0}^{\infty} t^{\frac{1+\alpha}{p}-1} |\varphi(t)| dt.$$
\end{lemma}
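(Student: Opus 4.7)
The plan is to prove the bound by a direct Minkowski-type argument combined with a scaling change of variables, treating the cases $1 \leq p < \infty$ and $p = \infty$ separately (the latter being essentially trivial).

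First I would treat the case $1 \leq p < \infty$. For every fixed $t > 0$, the change of variables $y = x/t$ gives
\begin{equation*}
\left(\int_{-\infty}^{\infty} |f(x/t)|^p |x|^\alpha dx\right)^{1/p} = t^{(1+\alpha)/p} \|f\|_{L^p_{|\cdot|^\alpha}(\R)}.
\end{equation*}
Applying Minkowski's integral inequality to the defining integral of $\H_\varphi f$, I would then obtain
\begin{equation*}
\|\H_\varphi f\|_{L^p_{|\cdot|^\alpha}(\R)} \leq \int_0^\infty \|f(\cdot/t)\|_{L^p_{|\cdot|^\alpha}(\R)} \frac{|\varphi(t)|}{t}\, dt = \|f\|_{L^p_{|\cdot|^\alpha}(\R)} \int_0^\infty t^{\frac{1+\alpha}{p}-1} |\varphi(t)|\, dt,
\end{equation*}
which is exactly the claimed bound. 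To justify Minkowski's integral inequality rigorously, I would first verify that $(t,x) \mapsto f(x/t) \varphi(t)/t$ is measurable on $(0,\infty)\times \R$ (standard) and that the right-hand side is finite by hypothesis (\ref{main inequality}); this guarantees the integral defining $\H_\varphi f(x)$ is absolutely convergent for a.e.\ $x$, and the interchange is valid.

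Next, for the case $p = \infty$, I interpret $L^\infty_{|\cdot|^\alpha}(\R)$ consistently with $\Hardy$ for $p=\infty$ as the ordinary $L^\infty(\R)$. Then the estimate is immediate:
\begin{equation*}
|\H_\varphi f(x)| \leq \int_0^\infty |f(x/t)| \frac{|\varphi(t)|}{t}\, dt \leq \|f\|_{L^\infty(\R)} \int_0^\infty \frac{|\varphi(t)|}{t}\, dt,
\end{equation*}
and the integral on the right is exactly $\int_0^\infty t^{\frac{1+\alpha}{\infty}-1} |\varphi(t)|\, dt$ under the usual convention $\frac{1+\alpha}{\infty} = 0$.

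The proof is essentially routine and presents no genuine obstacle; the only subtlety worth mentioning is ensuring measurability and finiteness conditions are in place before invoking Minkowski's integral inequality and Fubini--Tonelli. This is the real-variable analogue of Lemma \ref{14:38, 14/7/2021}(ii), and indeed the same pattern of argument (scaling in the space variable, pulling the $\varphi$ integral outside the norm) applies verbatim.
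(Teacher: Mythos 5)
Your proposal is correct and follows exactly the paper's own argument: the case $p=\infty$ is dispatched trivially, and for $1\leq p<\infty$ the bound follows from Minkowski's integral inequality combined with the scaling identity $\|f(\cdot/t)\|_{L^p_{|\cdot|^\alpha}(\R)}=t^{(1+\alpha)/p}\|f\|_{L^p_{|\cdot|^\alpha}(\R)}$. The extra remarks on measurability and absolute convergence are sensible but standard, and do not change the substance.
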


\begin{proof}
	The case $p=\infty$ is trivial. Otherwise, the Minkowski inequality yields
	$$\|\mathcal H_\varphi f\|_{L^p_{|\cdot|^\alpha}(\R)}= \left(\int_{\R} \left|\int_{0}^{\infty} f\left(\frac{x}{t}\right)\frac{\varphi(t)}{t}dt\right|^p |x|^\alpha dx\right)^{1/p}\leq \int_{0}^{\infty} t^{\frac{1+\alpha}{p}-1} |\varphi(t)| dt \;\|f\|_{L^p_{|\cdot|^\alpha}(\R)}$$
	for all $f\in L^p_{|\cdot|^\alpha}(\R)$. 
	This proves that $\H_\varphi$ is bounded on $L^p_{|\cdot|^\alpha}(\R)$, moreover,
	$$\|\H_\varphi\|_{L^p_{|\cdot|^\alpha}(\R)\to L^p_{|\cdot|^\alpha}(\R)}\leq \int_{0}^{\infty} t^{\frac{1+\alpha}{p}-1} |\varphi(t)| dt.$$
\end{proof}

\begin{lemma}\label{07:49, 21/11/2021}
	Let $\alpha>-1$, $0<\delta<1$ and $x\in \R$. Then, there is a constant $C(\alpha, \delta, x)>0$ such that, for every $\delta<t<1/\delta$,
	$$w_\alpha\left(B\left(x/t,\delta\right)\right)=\int_{B\left(x/t,\delta\right)} |u|^\alpha du\geq C(\alpha, \delta, x).$$
\end{lemma}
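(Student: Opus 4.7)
The plan is to reduce the claim to a routine continuity plus compactness argument. Write $g(c):=w_\alpha(B(c,\delta))=\int_{c-\delta}^{c+\delta}|u|^\alpha\,du$. I will first verify that $g$ is continuous and strictly positive on all of $\R$, and then observe that as $t$ ranges over the bounded interval $(\delta,1/\delta)$, the center $c=x/t$ stays inside a compact subset $K_x\subset\R$; the desired constant will be $C(\alpha,\delta,x):=\inf_{c\in K_x}g(c)$.

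First I would show that $g$ is continuous on $\R$. Since $\alpha>-1$, the weight $|u|^\alpha$ is locally integrable, so for any sequence $c_n\to c$ the dominated convergence theorem applied to $\chi_{B(c_n,\delta)}(u)|u|^\alpha$, majorized by $|u|^\alpha\chi_{B(c,\delta+1)}(u)$ for large $n$, gives $g(c_n)\to g(c)$. Next I would observe that $g(c)>0$ for every $c\in\R$: the interval $B(c,\delta)$ has positive Lebesgue measure and $|u|^\alpha$ is strictly positive away from the single point $u=0$, so the integral is strictly positive. (Explicitly, if $0\notin B(c,\delta)$ one obtains $g(c)\ge 2\delta\min\{(|c|-\delta)^\alpha,(|c|+\delta)^\alpha\}$, and if $0\in B(c,\delta)$ one has $g(c)\ge \int_{0}^{\delta-|c|}u^\alpha\,du=\frac{(\delta-|c|)^{\alpha+1}}{\alpha+1}$, but a clean existence argument suffices.)

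Now I describe $K_x$. If $x=0$, then $c=x/t=0$ identically, so $K_x=\{0\}$ and $C(\alpha,\delta,x)=g(0)=2\delta^{\alpha+1}/(\alpha+1)$. If $x\ne 0$, then $t\mapsto x/t$ is a homeomorphism of $(\delta,1/\delta)$ onto an open interval whose closure is compact and, more importantly, does not contain $0$. Thus in either case $K_x:=\overline{\{x/t:\delta<t<1/\delta\}}$ is compact. A continuous, strictly positive function on a compact set attains its positive minimum, so
\[
C(\alpha,\delta,x):=\min_{c\in K_x}g(c)>0,
\]
and by construction $w_\alpha(B(x/t,\delta))=g(x/t)\ge C(\alpha,\delta,x)$ for every $t\in(\delta,1/\delta)$, which is the asserted inequality.

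There is essentially no obstacle here; the only point requiring a moment of care is the continuity of $g$ at centers $c$ with $|c|\le\delta$, where the singularity of $|u|^\alpha$ at the origin (present when $-1<\alpha<0$) must be absorbed, which is handled by the local integrability hypothesis $\alpha>-1$ via dominated convergence.
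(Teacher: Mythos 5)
Your proposal is correct and follows essentially the same route as the paper: the paper also defines $f_{\alpha,\delta,x}(s)=\int_{B(s,\delta)}|u|^\alpha\,du$, notes it is continuous on the compact interval $[-|x|/\delta,|x|/\delta]$ (which contains all centers $x/t$ for $\delta<t<1/\delta$), and takes the positive minimum there. Your version merely spells out the continuity (via dominated convergence and local integrability from $\alpha>-1$) and the strict positivity in more detail, which the paper leaves implicit.
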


\begin{proof}
	Since $\alpha>-1$, we see that the function
	$$f_{\alpha, \delta, x}(s):= \int_{B(s,\delta)} |u|^\alpha du, \quad s\in [-|x|/\delta, |x|/\delta],$$
	is continuous on the compact set $[-|x|/\delta, |x|/\delta]$. Therefore, for every $\delta<t<1/\delta$,
	$$w_\alpha\left(B\left(x/t,\delta\right)\right)= f_{\alpha, \delta, x}(x/t)\geq \min_{s\in [-|x|/\delta, |x|/\delta]} f_{\alpha, \delta, x}(s)=: C(\alpha, \delta, x)>0.$$
\end{proof}

Now we are ready to give the proofs of Theorems \ref{16:37, 26/4/2025} and \ref{16:42, 26/4/2025}.

\begin{proof}[Proof of Theorem \ref{16:37, 26/4/2025}]
	{\it Case 1:} $p=\infty$. For any $x\in \R$, by $f\in \H^\infty_{|\cdot|^\alpha}(\bC_+)$ and $\int_{0}^{\infty} t^{-1}|\varphi(t)|dt<\infty$, the Lebesgue dominated convergence theorem and (\ref{22:32, 20/11/2021}) yield
	$$(\Hau f)^*(x)= \lim_{y\to 0} \int_0^\infty f\left(\frac{x}{t}+\frac{y}{t}i\right)\frac{\varphi(t)}{t}dt=\int_0^\infty f^*\left(\frac{x}{t}\right)\frac{\varphi(t)}{t}dt=\mathcal H_\varphi(f^*)(x).$$
	
	{\it Case 2:} $1\leq p<\infty$. For any $0<\delta<1$, let $\varphi_\delta$ be as in (\ref{14:48, 21/11/2021}), and define
	$$f_\delta(z):= f(z+\delta i),\quad z\in\bC_+.$$
	Then, 
	\begin{equation}\label{08:02, 21/11/2021}
		\int_{0}^{\infty} \left|\frac{\varphi_\delta(t)}{t}\right| dt\leq \delta^{-\frac{1+\alpha}{p}}\int_{0}^{\infty} t^{\frac{1+\alpha}{p}-1} |\varphi(t)| dt<\infty.
	\end{equation}
	Moreover, for every $x\in \R$, it follows from Lemmas \ref{15:10, 15/7/2021} and \ref{07:49, 21/11/2021} that there are two constant $C(p,\alpha), C(\alpha, \delta, x)>0$ for which
	\begin{eqnarray*}
		\left|f_\delta\left(\frac{x}{t}+\frac{y}{t}i\right)\right|&=& \left|f\left(x/t+\left(y/t+\delta\right)i\right)\right|\\
		&\leq& C(p,\alpha) \left[w_\alpha(B(x/t,y/t+ \delta))\right]^{-1/p}\|f\|_{\Hardy}\\
		&\leq& C(p,\alpha) \left[w_\alpha(B(x/t,\delta))\right]^{-1/p}\|f\|_{\Hardy}\\
		&\leq& C(p,\alpha) \left[C(\alpha, \delta, x)\right]^{-1/p}\|f\|_{\Hardy}
	\end{eqnarray*}
	for all $y>0$ and all $t\in (\delta,1/\delta)$. This, together with (\ref{08:02, 21/11/2021}) and the Lebesgue dominated convergence theorem, allows us to conclude that
	$$(\mathscr H_{\varphi_\delta} (f_\delta))^*(x)= \lim_{y\to 0} \int_0^\infty f_\delta\left(\frac{x}{t}+\frac{y}{t}i\right)\frac{\varphi_\delta(t)}{t}dt =\int_0^\infty (f_\delta)^*\left(\frac{x}{t}\right)\frac{\varphi_\delta(t)}{t}dt=\mathcal H_{\varphi_\delta}((f_\delta)^*)(x).$$
	Therefore, by Theorem \ref{the main theorem 2}(i), Lemmas \ref{10:18, 21/11/2021} and \ref{17:12, 22/11/2021}, we obtain that  
	\begin{eqnarray*}
		\|(\Hau f)^* - \mathcal H_\varphi(f^*)\|_{L^p_{|\cdot|^\alpha}(\R)}
		&\leq& \|\mathscr H_{\varphi-\varphi_\delta} f\|_{\Hardy} + \|\mathcal H_{\varphi-\varphi_\delta}(f^*)\|_{L^p_{|\cdot|^\alpha}(\R)} + \\
		&&  +\|\mathscr H_{\varphi_\delta}(f- f_\delta)\|_{\Hardy} + \|\mathcal H_{\varphi_\delta}(f^* - (f_\delta)^*)\|_{L^p_{|\cdot|^\alpha}(\R)}\\
		&\lesssim& \|f\|_{\Hardy} \int_0^\infty t^{\frac{1+\alpha}{p}-1}|\varphi(t)-\varphi_\delta(t)|dt +\\
		&&+ \|f^*-(f_\delta)^*\|_{L^p_{|\cdot|^\alpha}(\R)}\int_0^\infty t^{\frac{1+\alpha}{p}-1}|\varphi_\delta(t)|dt\\
		&\lesssim& \|f\|_{\Hardy} \left[\int_0^\delta t^{\frac{1+\alpha}{p}-1}|\varphi(t)|dt + \int_{1/\delta}^\infty t^{\frac{1+\alpha}{p}-1}|\varphi(t)|dt \right] +\\
		&&+ \|f^*- f(\cdot+\delta i)\|_{L^p_{|\cdot|^\alpha}(\R)}\int_0^\infty t^{\frac{1+\alpha}{p}-1}|\varphi(t)|dt\to 0
	\end{eqnarray*}
	as  $\delta\to 0$. This completes the proof of Theorem \ref{16:37, 26/4/2025}. 
	
\end{proof}

\begin{proof}[Proof of Theorem \ref{16:42, 26/4/2025}]
	(i) By Lemma \ref{17:12, 22/11/2021}, it suffices to prove that
	\begin{equation}\label{17:37, 22/11/2021}
		\left|\int_{0}^{\infty} t^{\frac{1+\alpha}{p}-1} \varphi(t) dt\right|\leq \|\H_\varphi\|_{L^p_{|\cdot|^\alpha}(\R)\to L^p_{|\cdot|^\alpha}(\R)}.
	\end{equation}
	Indeed, for any $0<\varepsilon<1$, we define $f_\varepsilon: \bC_+\to \bC$ as follows
	$$f_\varepsilon(z)= \varepsilon^{\frac{1+\alpha}{p}+\varepsilon} \Phi_\varepsilon(\varepsilon z)=\frac{1}{(z+i)^{\frac{1+\alpha}{p}+\varepsilon}}.$$
	Then, by Lemma \ref{10:18, 21/11/2021} and Remark \ref{10:09, 08/7/2021}(i),
	$$\|(f_\varepsilon)^*\|_{L^p_{|\cdot|^\alpha}(\R)}\sim \|f_\varepsilon\|_{\Hardy}\sim \varepsilon^{-1/p}.$$
	 Therefore,  by Theorem \ref{16:37, 26/4/2025}, Lemma \ref{10:18, 21/11/2021} and (\ref{17:34, 22/11/2021}), 
	$$\frac{\left\|\H_{\varphi_\delta}((f_\varepsilon)^*)- (f_{\varepsilon})^* \int_{0}^{\infty} t^{\frac{1+\alpha}{p}-1} \varphi_\delta(t)dt\right\|_{L^p_{|\cdot|^\alpha}(\R)}}{\|(f_\varepsilon)^*\|_{L^p_{|\cdot|^\alpha}(\R)}}\sim\frac{\left\|\mathscr H_{\varphi_\delta}(f_\varepsilon) - f_\varepsilon \int_{0}^{\infty} t^{\frac{1+\alpha}{p}-1} \varphi_\delta(t)dt\right\|_{\Hardy}}{\|f_\varepsilon\|_{\Hardy}} \to 0$$
	as $\varepsilon\to 0$. As a consequence, we obtain
	$$\left|\int_{\delta}^{1/\delta} t^{\frac{1+\alpha}{p}-1} \varphi(t) dt\right|= \left|\int_{0}^{\infty} t^{\frac{1+\alpha}{p}-1} \varphi_\delta(t)dt\right|\leq \|\H_{\varphi_\delta}\|_{L^p_{|\cdot|^\alpha}(\R)\to L^p_{|\cdot|^\alpha}(\R)},$$
	and thus, by $\int_{0}^{\infty} t^{\frac{1+\alpha}{p}-1} |\varphi(t)| dt<\infty$,
	$$\begin{aligned}
		&\quad\;\|\H_\varphi\|_{L^p_{|\cdot|^\alpha}(\R)\to L^p_{|\cdot|^\alpha}(\R)}\\
		&\geq \|\H_{\varphi_\delta}\|_{L^p_{|\cdot|^\alpha}(\R)\to L^p_{|\cdot|^\alpha}(\R)} -\|\H_{\varphi-\varphi_\delta}\|_{L^p_{|\cdot|^\alpha}(\R)\to L^p_{|\cdot|^\alpha}(\R)}\\
		& \geq\left|\int_{0}^{\infty} t^{\frac{1+\alpha}{p}-1} \varphi(t) dt\right| - 2 \left[\int_{0}^{\delta} t^{\frac{1+\alpha}{p}-1} |\varphi(t)| dt + \int_{1/\delta}^{\infty} t^{\frac{1+\alpha}{p}-1} |\varphi(t)| dt\right]\to \left|\int_{0}^{\infty} t^{\frac{1+\alpha}{p}-1} \varphi(t) dt\right|
	\end{aligned}$$
	as $\delta\to 0$. This proves that (\ref{17:37, 22/11/2021}) holds.
	
	(ii) From $\alpha>-1$, it follows that there exists $q\in (1,\infty)$ for which 
	$$\frac{1+\alpha}{p}> \frac{1}{q}.$$
	This gives
	$$\int_{0}^{\infty} t^{1/q-1} |\varphi_\delta(t)| dt= \int_{\delta}^{1/\delta} t^{1/q-1}|\varphi(t)|dt\leq \delta^{\frac{1}{q}-\frac{1+\alpha}{p}} \int_{0}^{\infty} t^{\frac{1+\alpha}{p}-1} |\varphi(t)| dt<\infty.$$
	Therefore, thanks to \cite[Theorem 3.1]{HKQ20}, we obtain
	$$H(\H_{\varphi_\delta} f) = \H_{\varphi_\delta}(H(f))$$
	for all $f\in L^q(\R)\cap L^p_{|\cdot|^\alpha}(\R)$. Thus, by Lemma \ref{17:12, 22/11/2021}, Remark \ref{10:11, 18/11/2021} and $\int_{0}^{\infty} t^{\frac{1+\alpha}{p}-1} |\varphi(t)| dt<\infty$, we see that
	$$\begin{aligned}
		&\quad\; \|H(\H_{\varphi} f)- \H_{\varphi}(H(f))\|_{L^p_{|\cdot|^\alpha}(\R)}\\
		&\leq \|H(\H_{\varphi} f)-H(\H_{\varphi_\delta} f)\|_{L^p_{|\cdot|^\alpha}(\R)} + \|\H_{\varphi}(H(f))- \H_{\varphi_\delta}(H(f))\|_{L^p_{|\cdot|^\alpha}(\R)}\\
		& \leq 2 \|H\|_{L^p_{|\cdot|^\alpha}(\R)\to L^p_{|\cdot|^\alpha}(\R)} \|f\|_{L^p_{|\cdot|^\alpha}(\R)}  \left[\int_{0}^{\delta} t^{\frac{1+\alpha}{p}-1} |\varphi(t)| dt + \int_{1/\delta}^{\infty} t^{\frac{1+\alpha}{p}-1} |\varphi(t)| dt\right]  \to 0
	\end{aligned}$$
	as $\delta\to 0$. Hence, $H(\H_{\varphi} f)= \H_{\varphi}(H(f))$ for all $f\in L^q(\R)\cap L^p_{|\cdot|^\alpha}(\R)$. This, together with the density of $L^q(\R)\cap L^p_{|\cdot|^\alpha}(\R)$ in $L^p_{|\cdot|^\alpha}(\R)$, allows us to conclude that 
	$$H(\H_{\varphi} f)= \H_{\varphi}(H(f))$$ 
	for all $f\in L^p_{|\cdot|^\alpha}(\R)$. This ends the proof of Theorem \ref{16:42, 26/4/2025}.
	
\end{proof}

\begin{acknowledgement}
	The authors would like to thank the referees for their careful reading and valuable suggestions, which have greatly improved the quality of the paper. This paper was completed during their visit to the Vietnam Institute for Advanced Study in Mathematics (VIASM), whose financial support and hospitality are gratefully acknowledged. This research is funded by Vietnam National Foundation for Science and Technology Development (NAFOSTED) under grant number 101.02-2023.09.
\end{acknowledgement}

%
%


\begin{thebibliography}{9}
%
%
\bibitem{AS} A. G. Arvanitidis and  A. G. Siskakis, Ces\`aro operators on the Hardy spaces of the half-plane. Canad. Math. Bull. {\bf56} (2013), no. 2, 229--240. 


\bibitem{BBMM} S. Ballamoole, J. O. Bonyo, T. L. Miller and V. G. Miller, Ces\`{a}ro-like operators on the Hardy and Bergman spaces of the half plane. Complex Anal. Oper. Theory 10 (2016), no. 1, 187--203.
%
%
\bibitem{BBGNPR} D. B\'{e}koll\'{e}, A. Bonami, G. Garrig\'{o}s, C. Nana,  M. M. Peloso and F. Ricci, Lecture notes on Bergman projectors in tube domains over cones: an analytic and geometric viewpoint. IMHOTEP J. Afr. Math. Pures Appl. {\bf 5} (2004), Exp. I, front matter + ii + 75 pp.

\bibitem{BG} \'O. Blasco and A. Galbis, Boundedness and compactness of Hausdorff operators on Fock spaces. Trans. Amer. Math. Soc. 377 (2024), no. 7, 5165--5196.


\bibitem{Bo} J. Bonet, Hausdorff Operators on Weighted Banach Spaces of Type $H^\infty$. Complex Anal. Oper. Theory 16 (2022), no.1, Paper No. 12, 14 pp.
%
%

\bibitem{CO} M. J. Carro and C. Ortiz-Caraballo. Boundedness of integral operators on decreasing functions. Proc. R. Soc. Edinburgh Sect. A 145 (2015), 725--744.

%
%
%



%
%
%

\bibitem{GP}  P. Galanopoulos and M. Papadimitrakis, Hausdorff and quasi-Hausdorff matrices on spaces of analytic functions. Canad. J. Math. {58} (2006), no. 3, 548--579.

\bibitem{GSi}  P. Galanopoulos and A. G. Siskakis, Hausdorff matrices and composition operators. Illinois J. Math. {45} (2001), no. 3, 757--773.

\bibitem{GSt} P. Galanopoulos and G. Stylogiannis, Hausdorff operators on Fock spaces and a coefficient multiplier problem. Proc. Amer. Math. Soc. 151 (2023), no.7, 3023--3035.

\bibitem{Ga} J. Garc\'ia-Cuerva, Weighted $H^p$ spaces. Dissertationes Math. (Rozprawy Mat.) {162} (1979), 63 pp.

\bibitem{GR} J. Garc\'ia-Cuerva and J. L. Rubio de Francia, Weighted norm inequalities and related topics. North-Holland Mathematics Studies, 116. Notas de Matemática [Mathematical Notes], 104. North-Holland Publishing Co., Amsterdam, 1985.

\bibitem{Garn} J. B. Garnett,  Bounded analytic functions. Revised first edition. Graduate Texts in Mathematics, 236. Springer, New York, 2007.

\bibitem{Ge} C. Georgakis, The Hausdorff mean of a Fourier-Stieltjes transform. Proc. Amer. Math. Soc. 116 (1992), no. 2, 465--471.


%


\bibitem{Ha} F. Hausdorff, Summationsmethoden und Momentfolgen. I. (German) Math. Z. 9 (1921), no. 1-2, 74--109.
%
\bibitem{HKQ18} H. D. Hung, L. D. Ky and T. T. Quang, Norm of the Hausdorff operator on the real Hardy space $H^1(\R)$. Complex Anal. Oper. Theory 12 (2018), no. 1, 235--245.


\bibitem{HKQ20} H. D. Hung, L. D. Ky and T. T. Quang, Hausdorff operators on holomorphic Hardy spaces and applications. Proc. Roy. Soc. Edinburgh Sect. A 150 (2020), no. 3, 1095--1112.
%
%
 
\bibitem{Ky} L. D. Ky, Lower bound of Hausdorff operators on the power weighted Hardy spaces. Math. Inequal. Appl. 25 (2022), no. 4, 1191--1200.


%
%
%
%
\bibitem{Li13} E. Liflyand,  Hausdorff operators on Hardy spaces. Eurasian Math. J. {4} (2013), no. 4, 101--141.

\bibitem{LM19} E. Liflyand and A. Miyachi, Boundedness of multidimensional Hausdorff operators in $H^p$ spaces, $0<p<1$. Trans. Amer. Math. Soc. 371 (2019), no. 7, 4793--4814.

\bibitem{LM00} E. Liflyand and F. M\'oricz, The Hausdorff operator is bounded on the real Hardy space $H^1(\R)$. Proc. Amer. Math. Soc. {128} (2000), no. 5, 1391--1396.



\bibitem{LMPS} C. Lizama, P. J. Miana, R. Ponce and L. S\'anchez-Lajusticia, On the boundedness of generalized Ces\`aro operators on Sobolev spaces. J. Math. Anal. Appl. 419 (2014), no. 1, 373--394.

\bibitem{MO} P. J. Miana and J. Oliva‑Maza, Integral operators on Sobolev-Lebesgue spaces. Banach J. Math. Anal. 15 (2021), no. 3, Paper No. 52, 30 pp.


\bibitem{Mi21} A. R. Mirotin, Hausdorff operators on some spaces of holomorphic functions on the unit disc. Complex Anal. Oper. Theory 15 (2021), no. 5, Paper No. 85, 16 pp.


\bibitem{RF16} J. Ruan and D. Fan, Hausdorff operators on the power weighted Hardy spaces. J. Math. Anal. Appl. 433 (2016), no. 1, 31--48.


\bibitem{Si87} A. G. Siskakis, Composition semigroups and the Ces\`{a}ro operator on $H^p$. J. London Math. Soc. (2) {36} (1987), no. 1, 153--164. 

\bibitem{Si90} A. G. Siskakis,  The Ces\`aro operator is bounded on $H^1$. Proc. Amer. Math. Soc. 110 (1990), no. 2, 461--462. 

\bibitem{Si96} A. G. Siskakis, On the Bergman space norm of the Ces\`{a}ro operator. Arch. Math. (Basel) {67} (1996), no. 4, 312--318.

\bibitem{St} G. Stylogiannis, Hausdorff operators on Bergman spaces of the upper half plane. Concr. Oper. 7 (2020), no. 1, 69--80.
%
%
%
%
%





\end{thebibliography}
\end{document}